\documentclass[]{article}

\usepackage[english]{babel}
\usepackage{amsmath}
\usepackage{amsthm}
\usepackage{amsfonts}
\usepackage{epsf}
\usepackage{graphicx}

\newtheorem{theorem}{Theorem}
\newtheorem{corollary}[theorem]{Corollary}
\newtheorem{lemma}[theorem]{Lemma}
\newtheorem{definition}{Definition}

\numberwithin{theorem}{section}
\numberwithin{definition}{section}

\begin{document}
	
\title{Revisiting Taxicab Apollonius Circles}
\author{Kevin P. Thompson}
	
\date{}
	
\thispagestyle{empty}
\renewcommand\thispagestyle[1]{} % Prevents \maketitle from inserting page no.
	
\maketitle
	
\begin{abstract}
The existence of excircles and an Apollonius circle for a triangle in taxicab geometry are connected to the concept of inscribed triangles.
\end{abstract}
	
\section{Introduction}
	
A general problem in Euclidean geometry addressed by the ancient geometer Apollonius was the construction of a circle which is tangent to three other objects, typically points, lines, or circles.  If the other objects are taken as circles, a specific version of this problem can be stated as: “Is there the circle that tangentially encompasses all three excircles of a given triangle?”  The answer in Euclidean geometry is, “Yes.”  The answer in taxicab geometry is … “Sometimes.”

Apollonius circles for taxicab triangles were first investigated in \cite{Ermis}.  The approach was very direct and very detailed.  The existence of a taxicab Apollonius circle for a triangle was conditioned on the slopes of the lines forming the triangle.  The present work hopes to reformulate many of these results in a more elegant way by incorporating the concepts of inscribed angles and triangles presented earlier in \cite{Thompson}.  Along the way we will also show that the conditions for an Apollonius circle stated in \cite{Ermis} are equivalent to the results presented here with one minor exception and some other adjustments.

For convenience some definitions and results from  \cite{Thompson} are presented below with some minor additions necessary in later sections. It should be noted that this research is being done in pure taxicab geometry using native taxicab angles (see \cite{Trig}) and not modified taxicab geometry in which Euclidean angles are retained.

\section{Inscribed Angles and Triangles}

Consider an angle whose vertex is placed at the vertex of a taxicab circle.  A number of orientations and conditions can occur.  If the angle is fully contained in the circle, the angle is said to be \textbf{completely inscribed} (Figure \ref{fig:inscribed-angles}b).  If the angle straddles the side of the circle with slope -1 (Figure \ref{fig:inscribed-angles}a), then the angle is said to be \textbf{strictly positively inscribed} (positive since a line of slope +1 through the vertex remains outside the angle).  Similarly, an angle straddling the side of the circle with slope +1 (Figure \ref{fig:inscribed-angles}c) is \textbf{strictly negatively inscribed}.

\begin{figure}
	\centering
	\includegraphics[width=8cm]{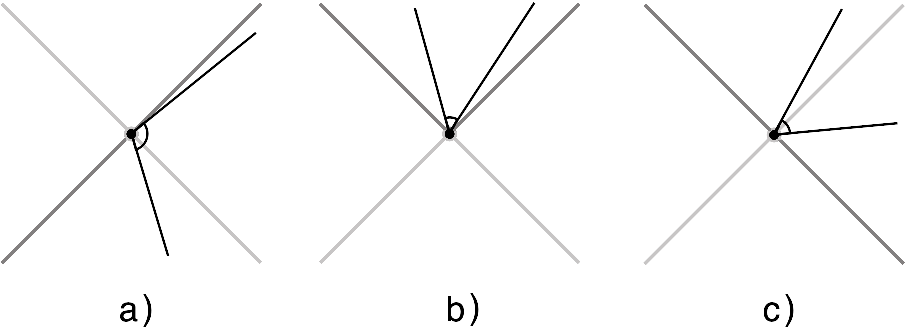}
	\caption{Inscribed taxicab angles. The angles shown are a) strictly positively inscribed, b) completely inscribed, and c) strictly negatively inscribed}
	\label{fig:inscribed-angles}
\end{figure}

In Euclidean geometry, using any point on a circle as the vertex, all angles less than $\pi$ radians can be represented as an inscribed angle by using two other points on the circle.  Since taxicab circles are composed of straight lines, they lack the curvature and “flexibility” of Euclidean circles. This causes some taxicab angles to not be inscribed.

It will be helpful to identify a primary characteristic of an angle that is not inscribed.  Without loss of generality, anchor one ray of an angle at the origin and place it in the upper half of Quadrant $I$ (the black ray in Figure \ref{fig:not-inscribed}).  The proposed second rays for the angle in light gray will form inscribed angles (using the dashed gray lines as guides, rays 3, 4, 5, or 6 form a negatively inscribed angle and rays 1, 2, 3, or 4 a positively inscribed angle).  The black ray in Quadrant $IV$ illustrates the kind necessary to form an angle that is not inscribed.  The important characteristic to note is that an angle that is not inscribed will always fully encompass a right angle aligned along lines of slope $\pm 1$, for otherwise one of the dashed lines would remain outside the angle therefore making it inscribed.

\begin{figure}[b]
	\centering
	\includegraphics{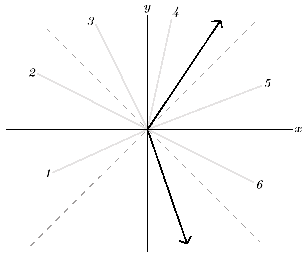}
	\caption{An angle that is not inscribed (black rays) opens so widely that it will always fully encompass a right angle aligned along lines of slope $\pm 1$}
	\label{fig:not-inscribed}
\end{figure}

Continuing with other helpful concepts, triangles with inscribed angles will be of particular interest.

\begin{definition}\label{def:inscribed-triangle}
A triangle is an \textbf{inscribed triangle} if all of its angles are inscribed.
\end{definition}

Curiously, the presence of even one strictly inscribed angle in a triangle is enough to force the triangle to be inscribed.

\begin{theorem}
If a triangle contains a strictly positively inscribed angle, then the other angles are negatively inscribed (and the triangle is therefore inscribed). 
\end{theorem}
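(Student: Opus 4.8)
The plan is to work not with metric lengths but with the arc of directions subtended by each of the three interior angles, measured against the four distinguished directions $45^\circ,135^\circ,225^\circ,315^\circ$ along the two lines of slope $\pm 1$ through a vertex (with $45^\circ$ and $225^\circ$ the slope $+1$ directions and $135^\circ$ and $315^\circ$ the slope $-1$ directions). Recall from the discussion above that an angle fails to be inscribed exactly when its arc of directions fully encompasses one of the four right angles cut out by these lines, equivalently when that arc contains two cyclically adjacent of the four distinguished directions; that an angle is strictly positively inscribed when its arc contains a slope $-1$ direction in its interior but neither slope $+1$ direction; and that an angle is negatively inscribed when it is inscribed and its arc meets neither slope $-1$ direction (this subsumes the completely inscribed case). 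The goal is thus to show the arcs at the other two vertices avoid both $135^\circ$ and $315^\circ$.

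First I would place the vertex carrying the strictly positively inscribed angle, call it $A$, at the origin, and write $u_\theta$ for the unit vector in direction $\theta$. Being an interior angle of a nondegenerate triangle, the angle at $A$ subtends an open arc of width less than $180^\circ$; write it as $(\alpha,\beta)$ with $\alpha<\beta<\alpha+180^\circ$, so the two sides at $A$ run in directions $\alpha$ and $\beta$. By hypothesis this arc contains one of $135^\circ,315^\circ$ and neither of $45^\circ,225^\circ$; replacing the figure by its reflection in the line $y=x$ if necessary --- a taxicab isometry that fixes the origin, fixes each slope $+1$ direction, and interchanges the two slope $-1$ directions --- I may assume the slope $-1$ direction in the arc is $135^\circ$, whence $45^\circ<\alpha<135^\circ<\beta<225^\circ$. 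Call the far endpoints of the two sides $P$ (reached from $A$ in direction $\alpha$) and $Q$ (reached in direction $\beta$), so $\{P,Q\}=\{B,C\}$ and $P=r_Pu_\alpha$, $Q=r_Qu_\beta$ for some $r_P,r_Q>0$.

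The heart of the argument is to locate the direction $\gamma$ of the third side and then read off the interior angles at $P$ and $Q$. Since $Q-P=r_Qu_\beta+r_Pu_{\alpha+180^\circ}$ is a positive combination of $u_\beta$ and $u_{\alpha+180^\circ}$, and these two directions are less than $180^\circ$ apart --- indeed $(\alpha+180^\circ)-\beta=180^\circ-(\beta-\alpha)\in(0^\circ,180^\circ)$ --- the direction $\gamma$ of $Q-P$ lies strictly between them: $\beta<\gamma<\alpha+180^\circ$, so in particular $135^\circ<\gamma<315^\circ$. Seen from $P$, the interior of the triangle is the open cone spanned by $A-P$ and $Q-P$, i.e.\ the short arc between directions $\alpha+180^\circ$ and $\gamma$; since $(\alpha+180^\circ)-\gamma$ already lies in $(0^\circ,180^\circ)$, that arc is $(\gamma,\alpha+180^\circ)$, which sits inside $(135^\circ,315^\circ)$ and so meets neither slope $-1$ direction. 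The same reasoning at $Q$, now with directions $\beta-180^\circ$ (toward $A$) and $\gamma-180^\circ$ (toward $P$) separated by $\gamma-\beta\in(0^\circ,180^\circ)$, gives interior angle $(\beta-180^\circ,\gamma-180^\circ)$, which sits inside $(-45^\circ,135^\circ)$ and again meets neither slope $-1$ direction. An arc missing both slope $-1$ directions contains no two cyclically adjacent distinguished directions, hence is inscribed, and missing the slope $-1$ line makes it negatively inscribed; so the angles at $B$ and $C$ are negatively inscribed, and the triangle is inscribed by Definition~\ref{def:inscribed-triangle}.

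I expect the only real friction to be the third paragraph: the bookkeeping with directions modulo $360^\circ$ must be kept honest, and one has to be sure that the interior angle at $P$ is the arc $(\gamma,\alpha+180^\circ)$ rather than its reflex complement. This rests on two elementary facts --- a positive combination of two vectors less than $180^\circ$ apart points strictly between them, and the interior of a triangle viewed from a vertex is precisely the open cone spanned by the two incident side vectors, whose angular width is automatically less than $180^\circ$. Once the inequality $45^\circ<\alpha<135^\circ<\beta<225^\circ$ is secured, everything else is just checking which of $135^\circ$ and $315^\circ$ can fall inside each of the two resulting subintervals --- and neither can.
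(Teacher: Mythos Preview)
The paper does not actually prove this theorem; Section~2 merely quotes it (along with Corollary~\ref{cor:strict-neg} and Theorems~\ref{thm:two-inscribed}--\ref{thm:three-point}) from \cite{Thompson} as background, so there is no in-paper argument to compare against.

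Your argument is correct. The translation of the trichotomy inscribed\,/\,strictly positively inscribed\,/\,negatively inscribed into statements about which of the four diagonal directions $45^\circ,135^\circ,225^\circ,315^\circ$ lie in an angle's open direction-arc is faithful to the definitions and to the paper's own description of non-inscribed angles. The reflection in $y=x$ is a genuine taxicab isometry sending $\theta\mapsto 90^\circ-\theta$, hence fixing $45^\circ,225^\circ$ and swapping $135^\circ\leftrightarrow 315^\circ$, so the reduction to $45^\circ\le\alpha<135^\circ<\beta\le225^\circ$ is legitimate. The key sandwich $\beta<\gamma<\alpha+180^\circ$ for the third side's direction then pins the interior-angle arcs at the other two vertices inside $(135^\circ,315^\circ)$ and $(-45^\circ,135^\circ)$ respectively, neither of which meets $\{135^\circ,315^\circ\}$; and since every cyclically adjacent pair among the four distinguished directions contains one of $135^\circ,315^\circ$, missing both automatically gives an inscribed (indeed negatively inscribed) angle. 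The only quibble is that your strict inequalities $45^\circ<\alpha$ and $\beta<225^\circ$ should really be $\le$ (a ray lying exactly along slope $+1$ still leaves $45^\circ$ or $225^\circ$ outside the \emph{open} arc), but the downstream containments $(\gamma,\alpha+180^\circ)\subset(135^\circ,315^\circ)$ and $(\beta-180^\circ,\gamma-180^\circ)\subset(-45^\circ,135^\circ)$ survive unchanged under the weak inequalities, so the conclusion is unaffected.
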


\begin{corollary}\label{cor:strict-neg}
If a triangle contains a strictly negatively inscribed angle, then the other angles are positively inscribed (and the triangle is therefore inscribed).
\end{corollary}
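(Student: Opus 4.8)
The plan is to deduce the corollary from the theorem by exploiting the reflective symmetry of the taxicab plane that interchanges the two diagonal directions, rather than by redoing the proof from scratch. Let $\sigma$ denote reflection across the $x$-axis, $\sigma(x,y) = (x,-y)$. I would first record the three properties of $\sigma$ that drive the argument: (i) $\sigma$ is a taxicab isometry, since $|x_1-x_2| + |y_1-y_2|$ is unchanged when the signs of the $y$-coordinates are flipped; (ii) $\sigma$ maps the unit taxicab circle onto itself, so it preserves the taxicab arc length along that circle, which is exactly the quantity defining the native taxicab angle measure (see \cite{Trig}) — hence $\sigma$ preserves taxicab angle measure despite reversing orientation; and (iii) $\sigma$ sends every line of slope $+1$ to a line of slope $-1$ and vice versa, while mapping the set of all horizontal and vertical lines to itself.

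From (i) and (iii) it follows that $\sigma$ carries every taxicab circle to a taxicab circle and every vertex of a taxicab circle to a vertex of the image circle. Combining this with the definitions of the preceding section, $\sigma$ therefore converts a strictly negatively inscribed angle (one straddling a side of slope $+1$) into a strictly positively inscribed angle (one straddling a side of slope $-1$) and conversely; it likewise interchanges the non-strict notions "positively inscribed" and "negatively inscribed," and it preserves the property of being completely inscribed. The clean way to state this is as a dictionary: for any angle with vertex at a vertex of a taxicab circle, the image under $\sigma$ satisfies the "positive" version of each inscribedness predicate exactly when the original satisfies the "negative" version, and vice versa.

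Now, given a triangle $T$ that contains a strictly negatively inscribed angle, set $T' = \sigma(T)$. By (i) and (ii), $T'$ is a triangle congruent to $T$, and by the dictionary it contains a strictly positively inscribed angle. Applying the theorem above to $T'$, the remaining two angles of $T'$ are negatively inscribed and $T'$ is inscribed. Since $\sigma$ is an involution ($\sigma = \sigma^{-1}$), applying the dictionary once more — now to $T' \mapsto \sigma(T') = T$ — shows the remaining two angles of $T$ are positively inscribed, so $T$ is inscribed. This gives both assertions of the corollary.

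I do not expect a genuine obstacle; the step that needs the most care is the bookkeeping behind properties (ii) and (iii) and the resulting dictionary — in particular, checking that the "positive" side of a circle is indeed the slope-$-1$ side (matching the labeling in Figure~\ref{fig:inscribed-angles}), that $\sigma$ sends it to the slope-$+1$ side, and that the correspondence is exact even in the completely inscribed borderline case. If one wished to avoid invoking the native angle measure entirely, an alternative is to repeat the proof of the theorem verbatim with the roles of slope $+1$ and slope $-1$ exchanged throughout; but the symmetry argument is shorter and makes transparent why the corollary is "the same" statement as the theorem.
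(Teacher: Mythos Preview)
The paper does not actually supply a proof of this corollary; it is listed, along with the preceding theorem, as one of the results imported from \cite{Thompson}. So there is nothing in the paper to compare against beyond the implicit suggestion, via the word ``Corollary,'' that it follows from the theorem by the obvious symmetry.

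Your argument carries that out correctly. The reflection $\sigma(x,y)=(x,-y)$ is indeed a taxicab isometry that swaps the two diagonal directions, hence swaps the predicates ``strictly positively inscribed'' and ``strictly negatively inscribed'' (and the non-strict versions), and preserves ``completely inscribed.'' Applying the theorem to $\sigma(T)$ and pulling back gives the corollary. The only cosmetic point is that invoking the native taxicab angle measure in (ii) is unnecessary: the notions of strictly/positively/negatively/completely inscribed are purely configurational (which side of the circle the angle straddles), so the dictionary follows from (i) and (iii) alone. Otherwise the proposal is sound and is exactly the kind of symmetry reduction the paper's labeling of the statement as a corollary invites.
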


Inscribed triangles also impose additional restrictions on their angles, and the more completely inscribed angles that are present the more limited the shape of the triangle.

\begin{theorem}
An inscribed triangle has at least one completely inscribed angle.
\end{theorem}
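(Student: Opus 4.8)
The plan is to argue by contradiction, leaning on the preceding theorem and on Corollary~\ref{cor:strict-neg} instead of re-deriving everything from the geometry of wedges. Suppose an inscribed triangle $ABC$ has \emph{no} completely inscribed angle. By Definition~\ref{def:inscribed-triangle} all three of its angles are inscribed, and since ``completely'', ``strictly positively'', and ``strictly negatively inscribed'' exhaust the possibilities for an inscribed angle, each angle of the triangle must be \emph{strictly} positively or strictly negatively inscribed; in particular at least one of them is strictly positively inscribed or at least one is strictly negatively inscribed. A reflection across a vertical line is a taxicab isometry that carries circles to circles while interchanging the sides of slope $+1$ and slope $-1$, hence swapping ``strictly positively'' with ``strictly negatively inscribed''; so without loss of generality we may assume the angle at $A$ is strictly positively inscribed.

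The contradiction then drops out of the two cited results. The preceding theorem, applied to $ABC$, forces the angles at $B$ and $C$ to be negatively inscribed, and since neither is completely inscribed (our standing assumption) both are strictly negatively inscribed. Applying Corollary~\ref{cor:strict-neg} to the strictly negatively inscribed angle at $B$ now forces the angles at $A$ and $C$ to be positively inscribed. But the angle at $C$ has just been shown to be strictly negatively inscribed, and a strictly negatively inscribed angle cannot be positively inscribed --- contradiction.

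The single point that deserves its own short argument, and the place I expect any friction, is that last mutual-exclusivity claim. Here I would use the characterization illustrated in Figure~\ref{fig:not-inscribed}: if an angle straddled the circle's side of slope $-1$ (the strict form of positive inscription) and simultaneously straddled the side of slope $+1$ (strict negative inscription), it would straddle lines of \emph{both} slopes $\pm 1$ through its vertex and therefore fully encompass one of the four right angles aligned along those lines, making it not inscribed at all. Equivalently, a line of slope $+1$ drawn through the vertex of a strictly negatively inscribed angle lies inside the angle, so it does not ``remain outside'' --- which is exactly the property that fails for a positively inscribed angle. With this remark in hand the proof is complete, since the case analysis above, together with the reduction in the first paragraph, covers every inscribed triangle having no completely inscribed angle.
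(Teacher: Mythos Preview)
The paper itself does not prove this theorem; it is one of the results imported from \cite{Thompson} in Section~2 and stated there without proof, so there is no in-paper argument to compare your work against.

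Your argument is correct. Once no angle is completely inscribed, each angle is strictly positively or strictly negatively inscribed, and the chain ``preceding theorem $\Rightarrow$ $B,C$ negatively inscribed $\Rightarrow$ (not completely) $B,C$ strictly negatively inscribed $\Rightarrow$ Corollary~\ref{cor:strict-neg} $\Rightarrow$ $C$ positively inscribed'' does produce the contradiction you claim. The mutual-exclusivity paragraph is the right justification: a strictly negatively inscribed angle by definition straddles the slope~$+1$ line through its vertex, so that line lies inside the angle and cannot simultaneously ``remain outside'' as positive inscription requires. The one step you leave implicit and might state in a sentence is that ``negatively inscribed and not completely inscribed'' forces ``strictly negatively inscribed''; this holds because an inscribed angle that is not completely inscribed straddles exactly one of the two diagonal directions, and negative inscription rules out straddling the slope~$-1$ line.
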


\begin{definition}\label{def:minimally-inscribed}
A triangle is \textbf{minimally inscribed} if it has only one completely inscribed angle.
\end{definition}

\begin{theorem}\label{thm:two-inscribed}
A inscribed triangle with exactly two completely inscribed angles has a side with slope $\pm 1$. 
\end{theorem}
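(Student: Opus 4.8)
The plan is to translate ``completely inscribed'' into a statement about which of the two special lines through a vertex --- the line of slope $+1$ and the line of slope $-1$ --- actually cuts the triangle. Call a line $\ell$ through a vertex $P$ a \emph{transversal at $P$} if $\ell$ meets the interior of the triangle, equivalently if $\ell$ strictly separates the other two vertices. The first step is to record the dictionary between inscription types and these transversals. The corner of a taxicab circle at one of its vertices is exactly a right angle bounded by a ray of slope $+1$ and a ray of slope $-1$, so an angle placed there is fully contained in the circle precisely when its interior contains no ray of slope $+1$ and no ray of slope $-1$. Together with the paper's characterization that a non-inscribed angle is exactly one swallowing a whole slope-$\pm1$ right angle (hence having both a slope-$+1$ ray and a slope-$-1$ ray interior to it), and noting that near a vertex the interior of the triangle coincides with the interior of the angle there, this yields: at a vertex $P$ of a triangle, the angle is not inscribed iff both special lines are transversals at $P$; it is strictly (positively or negatively) inscribed iff exactly one is; and it is completely inscribed iff neither is.

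The second ingredient is an elementary fact about a pencil of parallel lines meeting a triangle. Fix a direction $\delta$ and let $f$ be a linear functional with $\ker f = \delta$. If no side of the triangle is parallel to $\delta$, the values $f(P_1), f(P_2), f(P_3)$ are distinct; the vertex with the middle value has its $\delta$-line strictly separating the other two (a transversal), while the two extreme vertices have $\delta$-lines that are support lines meeting the triangle only at that vertex, so exactly one vertex's $\delta$-line is a transversal. If instead some side is parallel to $\delta$, that side lies on a $\delta$-line through two vertices (an edge, not a transversal) and the $\delta$-line through the third vertex is a support line on the opposite side, so no vertex's $\delta$-line is a transversal. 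Taking the contrapositive of the last statement: if none of the three $\delta$-lines through the vertices is a transversal, the triangle has a side parallel to $\delta$.

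Now suppose the triangle is inscribed with exactly two completely inscribed angles, and let $P$ be the vertex of the third angle. Being inscribed, the angle at $P$ is inscribed but, by assumption, not completely so; hence by the dictionary exactly one of the slope-$+1$ and slope-$-1$ lines through $P$ is a transversal at $P$. Let $s\in\{+1,-1\}$ be the slope of the one that is \emph{not} a transversal at $P$. At each of the other two vertices the angle is completely inscribed, so neither special line there is a transversal; in particular the slope-$s$ lines through those two vertices are not transversals. Thus none of the three slope-$s$ lines through the vertices is a transversal, and by the parallel-pencil fact the triangle has a side of slope $s$, hence of slope $\pm1$.

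The step requiring the most care is the dictionary, specifically the borderline configurations in which a side of the triangle has slope $\pm1$: then one ray of an angle lies along an edge of the circumscribed taxicab circle, and one must confirm that such an angle still qualifies as ``fully contained in the circle'' --- it does, since its wedge sits inside the closed square with one ray along an edge and contains no slope-$\pm1$ ray in its interior --- so it is genuinely completely inscribed. (Were boundary-touching angles excluded from ``completely inscribed'', the statement would fail.) The parallel-pencil lemma is routine once degenerate triangles are set aside, and the remainder is bookkeeping.
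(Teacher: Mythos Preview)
The paper does not actually prove this theorem; Section~2 quotes it from \cite{Thompson} as background, so there is no in-paper argument to compare against.

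Your proof is correct and self-contained. The dictionary you set up is exactly right: writing ``slope-$s$ line through $P$ is a transversal'' for ``some ray of slope $s$ lies in the interior of the angle at $P$'', one has that the angle at $P$ is completely inscribed iff neither slope-$\pm1$ line is a transversal, strictly (positively or negatively) inscribed iff exactly one is, and not inscribed iff both are. This matches the paper's descriptions (completely inscribed angles sit inside a quadrant bounded by slope-$\pm1$ rays; strictly inscribed angles straddle exactly one such ray; non-inscribed angles swallow a full slope-$\pm1$ right angle). Your parallel-pencil lemma is standard: projecting the three vertices along direction $\delta$ gives three distinct values precisely when no side is parallel to $\delta$, and then only the middle vertex yields a transversal. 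The conclusion then drops out by choosing $s$ to be the slope whose line is \emph{not} a transversal at the one strictly inscribed vertex.

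The boundary issue you flag --- a side already of slope $\pm1$ --- is the only genuine subtlety, and you handle it correctly: an angle with one ray along an edge of the taxicab square is still ``fully contained'' and hence completely inscribed, and the corresponding line (lying along a side) is not a transversal, so the counting argument survives. Nothing further is needed.
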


\begin{theorem}\label{thm:three-inscribed}
An inscribed triangle with three completely inscribed angles has two sides with slope $\pm 1$. 
\end{theorem}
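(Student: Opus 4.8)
The plan is to recast ``completely inscribed'' as a statement about four sectors of directions at each vertex, and then run a short combinatorial argument around the triangle.

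\emph{Setting up the dictionary.} At a point $V$ the two lines through $V$ of slope $+1$ and of slope $-1$ cut the plane into four closed right-angle sectors, the \emph{diagonal sectors} at $V$ --- precisely the four possible orientations of a right angle ``aligned along lines of slope $\pm1$'' (opening up, down, left, right). I would first argue that an angle with vertex $V$ is completely inscribed exactly when its convex region lies in one of these four sectors: otherwise one of the two diagonal lines through $V$ crosses the interior of the angle, and convexity then forces the angle to contain a full diagonal right angle --- i.e.\ the angle is strictly positively inscribed, strictly negatively inscribed, or not inscribed. For a nondegenerate angle the containing sector is unique, since two distinct diagonal sectors are either opposite (disjoint arcs of directions) or adjacent (sharing a single boundary ray), so their overlap cannot contain a cone of positive opening. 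Identifying each sector with its $90^\circ$ arc of directions, I may thus attach to the vertices of $ABC$ well-defined diagonal sectors $S_A,S_B,S_C$; write $\overline S$ for the opposite sector of $S$ (the involution swapping up$\leftrightarrow$down and left$\leftrightarrow$right).

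\emph{Two observations.} First, along the side $AB$ the bounding directions $\overrightarrow{AB}$ and $\overrightarrow{BA}$ are opposite, and $\overrightarrow{AB}\in S_A$ while $\overrightarrow{BA}\in S_B$; since a closed $90^\circ$ arc contains no pair of opposite directions, $S_A\ne S_B$, and likewise $S_A,S_B,S_C$ are pairwise distinct. Second, again along $AB$: $\overrightarrow{BA}$ lies in $S_B$ and, being opposite to $\overrightarrow{AB}\in S_A$, it also lies in $\overline{S_A}$. If $S_B\ne\overline{S_A}$, then $S_B$ and $\overline{S_A}$ are distinct diagonal sectors sharing the direction $\overrightarrow{BA}$; two distinct diagonal sectors share a direction only when they are adjacent, in which case the shared direction is their common boundary ray, which has slope $\pm1$. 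So I obtain: \emph{if a side is not a $\pm1$ line, then the diagonal sectors at its two endpoints are opposite}, with the same statement holding for $BC$ and $CA$.

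\emph{Conclusion.} Suppose for contradiction that at most one side of $ABC$ has slope $\pm1$. Then two of the three sides are non-diagonal, and after relabeling we may take these to be $AB$ and $BC$. The second observation gives $S_B=\overline{S_A}$ and $S_C=\overline{S_B}=\overline{\overline{S_A}}=S_A$, contradicting the distinctness from the first observation. Hence at least two sides have slope $\pm1$; and since two lines of slope $+1$ are parallel (as are two of slope $-1$), a triangle cannot have three such sides, so exactly two sides have slope $\pm1$.

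I expect the one delicate point to be the dictionary of the first paragraph --- extracting from the figure-based definitions the precise equivalence ``completely inscribed $\iff$ contained in one of the four diagonal sectors'', dealing with an angle one of whose rays lies exactly along a slope-$\pm1$ line, and confirming uniqueness of the containing sector for a genuine triangle angle. Once that is in place the remaining steps are purely combinatorial and short, and the same framework should reprove Theorem~\ref{thm:two-inscribed} uniformly.
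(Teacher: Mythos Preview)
The paper does not actually prove Theorem~\ref{thm:three-inscribed}: Section~2 opens by announcing that the definitions and results there are quoted from \cite{Thompson}, and this theorem (like Theorems~\ref{thm:two-inscribed} and \ref{thm:three-point}) is stated without proof. So there is no argument in the present paper to compare yours against.

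On its own merits your proof is correct. The dictionary you flag as delicate is precisely what the paper's definition encodes: an angle is completely inscribed when, with its vertex placed at a corner of a taxicab circle, it lies entirely inside the circle --- that is, inside the closed right-angle sector bounded by the two circle-sides (of slopes $+1$ and $-1$) meeting at that corner. The edge case where one ray of the angle has slope $\pm1$ is harmless: that ray lies on the common boundary of two adjacent diagonal sectors, but the other ray of a genuine triangle angle points in a different direction and singles out one of the two, so $S_A,S_B,S_C$ are well defined. Your two observations then go through cleanly, and the parity chase $S_C=\overline{S_B}=\overline{\overline{S_A}}=S_A$ against the pairwise-distinctness from Observation~1 gives the contradiction. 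The closing remark that three sides of slope $\pm1$ would force two parallel sides pins the count at exactly two. As you anticipate, the same framework handles Theorem~\ref{thm:two-inscribed}: if the three sectors are pairwise distinct but contain only one opposite pair, the third sector is adjacent to both members of that pair, forcing exactly one side of slope $\pm1$.
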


In Euclidean geometry "three points make a circle."  This is not always the case in taxicab geometry.  A crucial result for our investigation of Apollonius circles will be the Three-point Circle Theorem below.

\begin{theorem}[Three-point Circle Theorem] \label{thm:three-point}
Three non-collinear points lie on a taxicab circle if and only if the triangle they form is inscribed.
\end{theorem}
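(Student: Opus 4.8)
The plan is to pass to the rotated coordinates $u=x+y$, $w=x-y$. Since $|p|+|q|=\max(|p+q|,|p-q|)$, a taxicab disc $|x-a|+|y-b|\le r$ becomes an axis-parallel square, a line of slope $\pm1$ becomes an axis-parallel line, and a "right angle aligned along lines of slope $\pm1$" becomes a coordinate quadrant. In these coordinates the theorem reads: three non-collinear points lie on the boundary of a common axis-parallel square if and only if no vertex of the triangle they form has its angle sweeping through an entire coordinate quadrant. I would prove this as the conjunction of two elementary lemmas and then translate back.

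\emph{Lemma 1 (geometry of squares).} Three non-collinear points lie on a common axis-parallel square boundary if and only if no one of them is middle in \emph{both} coordinates; equivalently, if and only if none of them lies in the bounding box of the other two. The "only if" direction is immediate, since each edge of a square forces its points to be extremal in one coordinate among all points of that square, so a point extremal in neither coordinate lies on no edge. For "if" I would exhibit the square directly: using the symmetry group of the square (reflections and rotations, all taxicab isometries) reduce to the case where the $u$-spread dominates the $w$-spread and the point with the middle $u$-coordinate has the largest $w$-coordinate; then the square with vertical edges through the two $u$-extreme points and top edge through the middle point carries all three points on its boundary, the $w$-coordinates of the two extreme points landing inside precisely because the $u$-spread dominates.

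\emph{Lemma 2 (translating "inscribed").} For a non-collinear triangle, the angle at a vertex fails to be inscribed if and only if that vertex is middle in both coordinates. One direction is essentially the characterization accompanying Figure~\ref{fig:not-inscribed}: a non-inscribed angle must fully contain an aligned right angle, that is, a coordinate quadrant, and a short direction-chase shows this puts the other two vertices into opposite relative quadrants of the given one, making it middle in both coordinates. Conversely, if a vertex is middle in both coordinates then the other two lie in opposite relative quadrants, so the triangle's interior angle there sweeps through a full quadrant and is not inscribed. Since at most one vertex can be middle in both coordinates, the triangle is inscribed if and only if no vertex is --- which is exactly the condition from Lemma~1. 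Chaining the equivalences (taxicab circle $\leftrightarrow$ axis-parallel square boundary $\leftrightarrow$ no vertex middle in both coordinates $\leftrightarrow$ inscribed triangle) completes the proof.

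The step I expect to be the main obstacle is the degenerate configurations: when two of the points share a $u$- or a $w$-coordinate --- equivalently, when the triangle has a side of slope $\pm1$ (and the corresponding taxicab perpendicular bisector is a two-dimensional region) --- the notion "middle coordinate" is no longer unambiguous and the clean construction of Lemma~1 needs adjusting. These are exactly the triangles with two or three completely inscribed angles treated in Theorems~\ref{thm:two-inscribed} and~\ref{thm:three-inscribed}, so I would dispatch them as a short checklist steered by those theorems rather than forcing them through the generic argument. A smaller point requiring care is the borderline case of a "completely inscribed" angle that coincides with a corner of the circle, which must be kept on the inscribed side of the dichotomy.
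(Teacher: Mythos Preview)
The paper does not contain a proof of this theorem. Section~2 is explicitly a summary of background material imported from \cite{Thompson}; the Three-Point Circle Theorem is stated there without argument and is used only as a black box in Sections~3 and~4. There is therefore nothing in the present paper to compare your proposal against.

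That said, your approach is correct and self-contained. The $45^\circ$ change of variables $u=x+y$, $w=x-y$ turns the problem into one about axis-parallel squares, and in that picture both lemmas become transparent. For Lemma~2, unwinding the definitions shows that the angle at $P$ is positively inscribed exactly when $Q$ and $R$ lie weakly on the same side of the horizontal line $w=w_P$, and negatively inscribed exactly when they lie weakly on the same side of $u=u_P$; hence ``not inscribed'' is literally the statement that $P$ is strictly between $Q$ and $R$ in both coordinates. For Lemma~1, the ``only if'' direction is immediate as you say, and your normalized construction for ``if'' is right: once the $u$-spread dominates, the strictly $u$-middle point is forced by hypothesis to be $w$-extremal, and the square you build then contains the two $u$-extreme points on its vertical edges precisely because the $w$-spread is no larger. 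The degenerate cases you flag (a shared $u$- or $w$-coordinate, i.e.\ a side of slope $\pm1$) are exactly the configurations of Theorems~\ref{thm:two-inscribed} and~\ref{thm:three-inscribed}; they do not break the argument so much as make the choice of ``middle'' point non-unique, and a one-line check disposes of them. This route is more structural than a native taxicab case analysis via perpendicular bisectors, and it makes the equivalence with the bounding-box condition explicit.
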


\begin{figure}[b!]
	\centering
	\includegraphics[width=\linewidth]{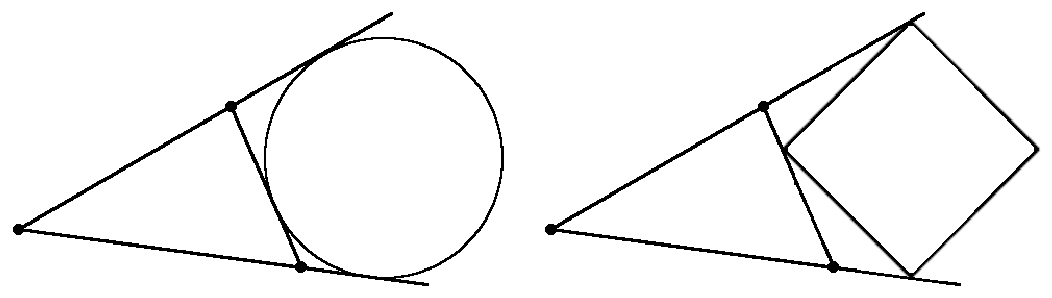}
	\caption{Excircles in Euclidean and taxicab geometry}
	\label{fig:excircles}
\end{figure}

\section{Taxicab Excircles}

There are two distinct steps in the process for determining if a triangle has a taxicab Apollonius circle: does an excircle  exist for each side of the triangle, and if so, does a circle exist that tangentially encompasses the three excircles?  We will answer these questions separately. Implicitly this was achieved in \cite{Ermis} but it will be instructive to clearly separate the conditions necessary for each step.  In addition, the concepts of inscribed angles and triangles are closely related to excircles and Apollonius circles, so it will be very instructive to also incorporate these concepts into the investigation. First, let's review the definition of an excircle. 

\begin{definition}\label{def:excircle}
An \textbf{excircle} (“escribed circle”) of a triangle is a circle outside the triangle that is tangent to a side of the triangle and tangent to the rays that form the angle opposite the side in the triangle.
\end{definition}

\begin{lemma}\label{lem:not-inscribed}
The side of a triangle opposite a taxicab angle that is not inscribed does not have an excircle.
\end{lemma}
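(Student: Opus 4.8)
The plan is to exploit the structural fact about non-inscribed angles recorded in Section~2: such an angle always fully encompasses a right angle whose two sides lie along lines of slope $+1$ and $-1$. Call the triangle $ABC$ with the non-inscribed angle at $A$, and put $A$ at the origin. All eight symmetries of the coordinate square (the quarter-turns and the reflections in the axes and in the lines of slope $\pm 1$) are taxicab isometries carrying taxicab circles to taxicab circles and hence excircles to excircles, so after applying one of them I may assume the right angle encompassed by angle~$A$ is the upward-opening one, with sides running in the directions of slope $+1$ and slope $-1$ into the upper half-plane. Then each ray bounding angle~$A$ makes an angle with the positive $x$-axis lying in $(-45^\circ,45^\circ)$ or in $(135^\circ,225^\circ)$ — so each has slope strictly between $-1$ and $1$, one pointing rightward (take this to be line $\overline{AB}$) and one pointing leftward (line $\overline{AC}$) — and, since a triangle's interior angle is less than a straight angle, the whole angular sector at $A$ lies in the closed half-plane lying above line $\overline{AB}$ and also in the closed half-plane lying above line $\overline{AC}$.

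The crux is a rigidity property of taxicab circles: if a taxicab circle $K$ is contained in a closed half-plane whose boundary line $\ell$ has slope $m$ with $|m|<1$, and $K$ is tangent to $\ell$, then $K$ meets $\ell$ precisely at the vertex of $K$ that points perpendicularly toward $\ell$ — its bottom vertex when $\ell$ bounds $K$ from below. Indeed, writing $\ell=\{y=mx+c\}$ with $K\subseteq\{y\ge mx+c\}$, tangency says $\min_{p\in K}(y_p-mx_p)=c$; the linear functional $y-mx$ attains its minimum over the diamond $K$ at the extreme point of $K$ in the direction $(m,-1)$, and because $|m|<1$ this is the unique bottom vertex of $K$. (An entire edge of $K$ would be involved only when $|m|=1$, which the reduction above has excluded.)

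Now suppose, for contradiction, that the side $BC$ has an excircle $E$. By Definition~\ref{def:excircle}, $E$ is a taxicab circle outside the triangle that is tangent to both rays forming angle~$A$; an elementary check over the four angular sectors at $A$ cut out by the lines $\overline{AB}$ and $\overline{AC}$ shows that only the sector of angle~$A$ itself permits a circle to be tangent to both of those rays (rather than to a backward extension of one of them), so $E$ lies in that sector and is therefore contained in both closed half-planes described above. Applying the rigidity property to $\overline{AB}$ and then to $\overline{AC}$, the bottom vertex of $E$ lies on $\overline{AB}$ and on $\overline{AC}$; as these lines meet only at $A$, the bottom vertex of $E$ is $A$. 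But then the two edges of $E$ issuing from this vertex run in the directions of slope $+1$ and slope $-1$ into the upper half-plane — exactly the two sides of the right angle that angle~$A$ encompasses — so these edges, and with them the interior of $E$ near $A$, lie strictly inside angle~$A$ and hence inside the triangle near $A$, contradicting the assumption that $E$ is outside the triangle. Hence $BC$ has no excircle, and (as a byproduct) the tangency of $E$ to $BC$ played no role.

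The step I expect to be the main obstacle is the middle paragraph, together with the bookkeeping that precedes and follows it: one must identify exactly which vertex of the taxicab circle realizes each of the two tangencies and verify that the two rays of a non-inscribed angle compel the \emph{same} vertex in both cases, which is what forces the excircle's vertex down onto $A$. The symmetry reduction and the appeal to "interior angle $<\pi$" to seat the sector inside the two half-planes are routine but must be stated with enough care that the slope-$\pm 1$ boundary cases are genuinely excluded rather than swept aside.
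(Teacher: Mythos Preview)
Your argument is correct, and it is organized quite differently from the paper's own proof.

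The paper orients the non-inscribed angle so that it opens to the right, with both bounding rays of absolute slope greater than $1$; it then starts from a taxicab circle tangent to the opposite side $\overline{BC}$ at its leftmost vertex and argues visually that the circle's edges (of slope $\pm 1$) are too shallow ever to meet the more steeply diverging rays, treating separately the degenerate placement of the circle at the vertices $B$ or $C$. Your proof instead orients the angle to open upward (both rays of absolute slope less than $1$), ignores the side $\overline{BC}$ entirely, and attacks the two ray-tangency conditions directly: the ``rigidity'' observation that a taxicab disk lying in a half-plane bounded by a line of shallow slope can touch that line only at its bottom vertex forces the putative excircle's bottom vertex onto both lines $\overline{AB}$ and $\overline{AC}$, hence onto $A$ itself, and then the two edges of the diamond leaving $A$ fall strictly inside the triangle. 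This buys you a cleaner, case-free argument and a slightly stronger conclusion (tangency to $\overline{BC}$ is never invoked, as you note); the paper's version, by contrast, is more pictorial and leans on Figure~\ref{fig:not-inscribed-excircle} to carry the ``too shallow to reach the rays'' step and the separate vertex case.
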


\begin{proof}
Without loss of generality, anchor one ray of the angle at the origin and place it in the upper half of Quadrant $I$ (the black ray in Figure \ref{fig:not-inscribed-excircle}).  As explained in the previous section, an angle that is not inscribed will always fully encompass a right angle aligned along lines of slope $\pm 1$, so the black ray in Quadrant $IV$ is of the kind necessary in this situation.   When two points along the rays are connected to form the side of the triangle opposite the angle, any taxicab circle (green) tangent at its left point to the side of the triangle will never intersect the black rays because an angle that is not inscribed opens too widely.  This is true no matter how the rays are connected to form the side of the triangle.  The sides of the taxicab circle will simply not be steep enough to intersect either of the diverging rays to form an excircle.  In addition, any attempt to draw taxicab circles at the vertices will trivially touch one of the black rays but will fail to touch the other (imagine expanding the red taxicab circles).  Therefore, an excircle cannot be formed at the side of a triangle opposite an angle that is not inscribed.
\end{proof}

\begin{figure}[h!]
	\centering
	\includegraphics[width=6cm]{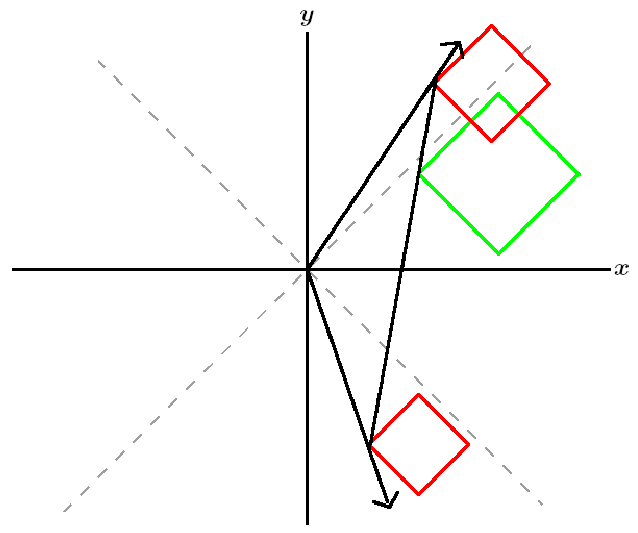}
	\caption{An angle that is not inscribed (black rays) opens too widely to allow the construction of an excircle on the side opposite the angle (green taxicab circle) or at the vertices (red taxicab circles)}
	\label{fig:not-inscribed-excircle}
\end{figure}

This lemma largely correlates to Theorem 2.1 of \cite{Ermis} but codifies the result in terms of inscribed angles. It indicates that the search for excircles and therefore Apollonius circles must be constrained to triangles whose angles are all inscribed.  This is the precise definition an inscribed triangle (Definition \ref{def:inscribed-triangle}).  We will examine the classes of inscribed triangles separately based on the number of completely inscribed angles in the triangle.  We will see that it is only possible to construct taxicab excircles on all sides of a triangle for certain kinds of inscribed triangles.

\begin{figure}
	\centering
	\includegraphics[width=5cm]{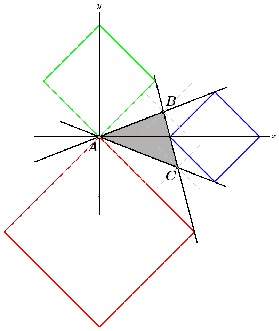}
	\caption{Constructing taxicab excircles (green, blue, and yellow) for a triangle with exactly one completely inscribed angle.}
	\label{fig:excircles-one-comp-insc}
\end{figure}

\begin{lemma}\label{lem:excircles-one-comp-insc}
Excircles can be constructed at all sides of a minimally inscribed taxicab triangle.
\begin{proof}
Let $\triangle ABC$ have a completely inscribed $\angle A$ with strictly negatively inscribed $\angle B$ and strictly positively inscribed $\angle C$ as shown in Figure \ref{fig:excircles-one-comp-insc}.  Since $\angle A$ is completely inscribed, in this case the absolute slope of its rays is less than 1; therefore, a taxicab circle in its interior (blue) can be expanded to be tangent to both of the angle’s rays.  Using the gray dashed lines at $\angle B$ and $\angle C$ we can see that the strictly inscribed nature of the angles limits the slope possibilities for $\overline{BC}$ in such a way that taxicab circles outside the triangle and tangent to $\overline{BC}$ will remain outside the triangle.  Therefore, $\overline{BC}$ has an excircle.

Taxicab circles tangent to the outside of $\overline{AB}$ can never be tangent to extensions of $\overline{AC}$ unless the bottom vertex of the taxicab circle is actually at point $A$, but such circles will always be able to be tangent to extensions of $\overline{BC}$ if of sufficient size (green).  Similarly, the excircle for $\overline{AC}$ must have its top vertex at point $A$ (red).

Triangles with other orientations of the completely inscribed angle can be proven similarly.
\end{proof}
\end{lemma}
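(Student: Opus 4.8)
The plan is to establish the three excircles one side at a time, exploiting the rigidity that minimal inscribedness imposes on the slopes of the sides, and in each case producing the excircle as a limit of a one-parameter family of growing taxicab circles. First I would fix, without loss of generality, a convenient placement of the triangle. By Definition~\ref{def:minimally-inscribed} exactly one angle, say $\angle A$, is completely inscribed, and then the other two angles must be strictly inscribed; moreover they must be of opposite types (one strictly positively, one strictly negatively inscribed), since by the strict-inscription theorem above a strictly positively inscribed angle forces the remaining angles to be negatively inscribed, which rules out a second strictly positively inscribed angle. So I take $\angle B$ strictly negatively inscribed and $\angle C$ strictly positively inscribed, as in Figure~\ref{fig:excircles-one-comp-insc}. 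I then orient the circle at $A$ so that the two rays of $\angle A$ have absolute slope strictly less than $1$ (the defining feature of a complete inscription, with both slope-$\pm 1$ lines through $A$ lying outside the wedge), translate so that $A$ is the origin, and record the inequalities on the slopes of $\overline{AB}$, $\overline{AC}$, and $\overline{BC}$ that the strict inscriptions at $B$ and $C$ force.

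For the side $\overline{BC}$ opposite the completely inscribed angle: because the rays of $\angle A$ have absolute slope $<1$, a small taxicab circle placed in the interior of $\angle A$ can be grown, its radius increasing, until it is simultaneously tangent to both rays — the ``incircle of a wedge'' construction, which succeeds precisely because the two supporting lines of such a flat wedge admit a common tangent circle. It then remains to check that a circle of this family can be made tangent to $\overline{BC}$ from outside the triangle; here the slope bounds on $\overline{BC}$ coming from the strict inscriptions at $B$ and $C$ guarantee that the family of taxicab circles tangent to the outside of $\overline{BC}$ sweeps across the wedge of $\angle A$, so a continuity (intermediate-value) argument yields one member tangent to both rays, which is the excircle on $\overline{BC}$.

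For the two sides $\overline{AB}$ and $\overline{AC}$ adjacent to $\angle A$ the argument is a \emph{pinning} argument. Consider $\overline{AB}$, whose opposite angle is the strictly inscribed $\angle C$, with rays along the lines $CA$ and $CB$. Using the recorded slope inequalities, a taxicab circle tangent to the outside of $\overline{AB}$ cannot be tangent to the line through $\overline{AC}$ except degenerately, with the relevant (bottom) vertex of the circle coinciding with $A$; so I pin that vertex at $A$ and let the circle grow, noting that its far side moves monotonically outward and, given the slope of $\overline{BC}$, eventually meets the extension of $\overline{BC}$. A continuity argument again produces a circle tangent to $\overline{AB}$, tangent to line $AC$ at $A$, and tangent to the extension of $\overline{BC}$ — the excircle on $\overline{AB}$. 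The side $\overline{AC}$ is treated identically with ``top'' and ``bottom'' vertex interchanged, and the other orientations of the completely inscribed angle follow by the evident reflections across lines of slope $\pm 1$.

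The main obstacle I expect is making the continuity and monotonicity steps genuinely rigorous rather than merely read off the figure: one must verify that as a taxicab circle tangent to a fixed line grows, its intersections with the two rays (or with the extension of a side) vary monotonically, and that the slope inequalities force the two tangency parameters actually to cross, so that an intermediate value produces simultaneous tangency with no overshoot. Equally delicate is the careful handling of tangency at the pointed vertex of a taxicab circle in the pinning argument — one has to confirm that line $AC$ stays entirely on one side of the circle at $A$ — together with a clean bookkeeping of the slope inequalities so that all the cases and all the reflected orientations are covered uniformly.
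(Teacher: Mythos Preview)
Your proposal is correct and follows essentially the same route as the paper: the same WLOG placement of the completely inscribed angle at $A$ with strictly inscribed angles of opposite type at $B$ and $C$, the wedge-filling construction for the excircle on $\overline{BC}$, and the vertex-pinning argument (bottom vertex at $A$ for $\overline{AB}$, top vertex at $A$ for $\overline{AC}$) for the other two. Your write-up is more explicit about the continuity/intermediate-value justification and more careful about where rigor is needed, which is welcome, but the underlying strategy is the paper's.
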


\begin{lemma}\label{lem:excircles-two-comp-insc}
Excircles can be constructed at all sides of an inscribed taxicab triangle that has two completely inscribed angles only if for the two non-diagonal sides one side is steep and the other side is shallow.
\begin{proof}
By Theorem \ref{thm:two-inscribed}, triangles with two completely inscribed angles have one side with slope $\pm 1$; choose $\overline{BC}$ (i.e. $|m_a|=1$).

\noindent Case 1: $|m_b|<1$ and $|m_c|<1$ (left triangle in Figure \ref{fig:excircles-two-comp-insc}).  Any possible excircle placed along $\overline{AB}$ (red) will never intersect extensions of both $\overline{AC}$ and $\overline{BC}$ simultaneously.  Since the slope of the proposed excircle is greater than or equal to the slopes of both lines, only by placing the circle at a vertex can one side be touched, but then the other side will not be touched.  Therefore, $\overline{AB}$ cannot have an excircle tangent to it. The case of both sides steep is proven similarly.

\noindent Case 2: $|m_b|>1|$ and $|m_c|<1$ (right triangle in Figure \ref{fig:excircles-two-comp-insc}).  With $\overline{AC}$ now steep, a circle can be placed with its bottom vertex at $B$ and inflated to eventually intersect the extension of $\overline{AC}$ thus forming an excircle for $\overline{AB}$.  The excircle for $\overline{BC}$ exists due to its diagonal nature and the excircle for $\overline{AC}$ exists due to the shallow slope of $\overline{AB}$.						      
\end{proof}
\end{lemma}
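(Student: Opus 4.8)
The plan is to follow the pattern of the preceding lemma: invoke Theorem \ref{thm:two-inscribed} to name the diagonal side $\overline{BC}$ (so that $|m_a|=1$), and then run a short case analysis on the absolute slopes of the two non-diagonal sides $\overline{AC}$ and $\overline{AB}$, each of which I take to be genuinely non-diagonal as the statement presupposes, hence either shallow ($|m|<1$) or steep ($|m|>1$). The only structural fact needed is that every edge of a taxicab circle has slope $\pm 1$, so a line strictly shallower than $1$ can be tangent to a taxicab circle only at its top or bottom vertex, and a line strictly steeper than $1$ only at its left or right vertex. Since the lemma is an ``only if'' statement, it suffices to show that the two matched cases --- both non-diagonal sides shallow, or both steep --- fail to yield a full set of excircles.

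Take both $\overline{AB}$ and $\overline{AC}$ shallow; I claim the side $\overline{AB}$ opposite vertex $C$ has no excircle. An excircle on $\overline{AB}$ would be a taxicab circle tangent to $\overline{AB}$ and to the extensions of $\overline{CA}$ and $\overline{CB}$, and all three of these lines are shallow, so each can meet the circle only at its top or bottom vertex. A taxicab circle has only two such vertices while the three lines are pairwise intersecting, so no single circle can be tangent to more than one of them unless it is seated with a vertex at a point common to two of the lines --- that is, at the triangle vertex $A$ (where $\overline{AB}$ and $\overline{CA}$ meet) or at $B$ (where $\overline{AB}$ and $\overline{CB}$ meet). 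Seated at $A$ and inflated, the circle does touch both $\overline{AB}$ and the extension of $\overline{CA}$ there, but its remaining vertices move along lines of slope $\pm 1$ and so stay strictly on the triangle side of the shallow extension of $\overline{CB}$; seated at $B$ it symmetrically misses the extension of $\overline{CA}$. Hence $\overline{AB}$ admits no excircle, and the ``both steep'' case follows by the reflection of the plane that interchanges shallow with steep (and top/bottom with left/right vertices). This establishes the ``only if.''

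For completeness --- and to match Case 2 of Figure \ref{fig:excircles-two-comp-insc} --- I would also record the converse in the remaining configuration, one non-diagonal side steep and the other shallow, say $\overline{AC}$ steep and $\overline{AB}$ shallow: $\overline{BC}$ has an excircle because a taxicab circle can be laid flush against a slope-$\pm 1$ line along a whole edge and then expanded away from the triangle; $\overline{AB}$ gets one by seating a circle at $B$ and inflating it until it meets the steep extension of $\overline{AC}$; and $\overline{AC}$ gets one from the shallowness of $\overline{AB}$, exactly as in the minimally inscribed case treated in Lemma \ref{lem:excircles-one-comp-insc}.

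I expect the genuine obstacle to be the degenerate-tangency bookkeeping in the middle paragraph: pinning a taxicab circle's vertex at $A$ and rigorously showing that \emph{no} amount of inflation ever brings it into tangency with the third line, rather than merely that the generic placement fails. Making this airtight means parametrizing the circle by its radius, tracking its four vertices as affine functions of that radius, and verifying a monotonicity inequality that keeps the relevant shallow extension strictly exterior throughout --- routine coordinate work, but exactly the sort a figure quietly glosses over and that is easy to get subtly wrong.
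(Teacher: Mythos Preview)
Your overall plan---invoke Theorem \ref{thm:two-inscribed} to pin down the diagonal side $\overline{BC}$, then run a slope dichotomy on the other two sides and show the matched cases fail---is exactly the paper's strategy, and your Case 2 sketch matches the paper's almost word for word.

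There is, however, a concrete slip in your failure argument. You write that for the would-be excircle at $\overline{AB}$, ``all three of these lines are shallow.'' They are not: $\overline{CB}=\overline{BC}$ is the diagonal side with $|m_a|=1$, so its extension is tangent to a taxicab circle along an entire edge, not just at a top or bottom vertex. That collapses your pigeonhole count: you now have only \emph{two} genuinely shallow lines ($\overline{AB}$ and the extension of $\overline{CA}$) competing for the two top/bottom vertices, which is no obstruction at all, and the diagonal line has a whole edge available. So the sentence ``no single circle can be tangent to more than one of them unless it is seated with a vertex at a point common to two of the lines'' is not justified as written. The paper's version of this step sidesteps the issue by phrasing the slope comparison as ``greater than \emph{or equal to}''---implicitly acknowledging that $\overline{BC}$ is borderline---and then leans on the figure to argue that a circle seated at $A$ or $B$ still cannot reach the remaining extension. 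Your argument needs the same repair: treat the diagonal extension of $\overline{CB}$ separately (edge tangency rather than vertex tangency) and redo the placement analysis with that in mind. The monotonicity check you flag in your final paragraph is indeed where the real work lives, for both you and the paper.
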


\begin{figure}
	\centering
	\includegraphics[width=10cm]{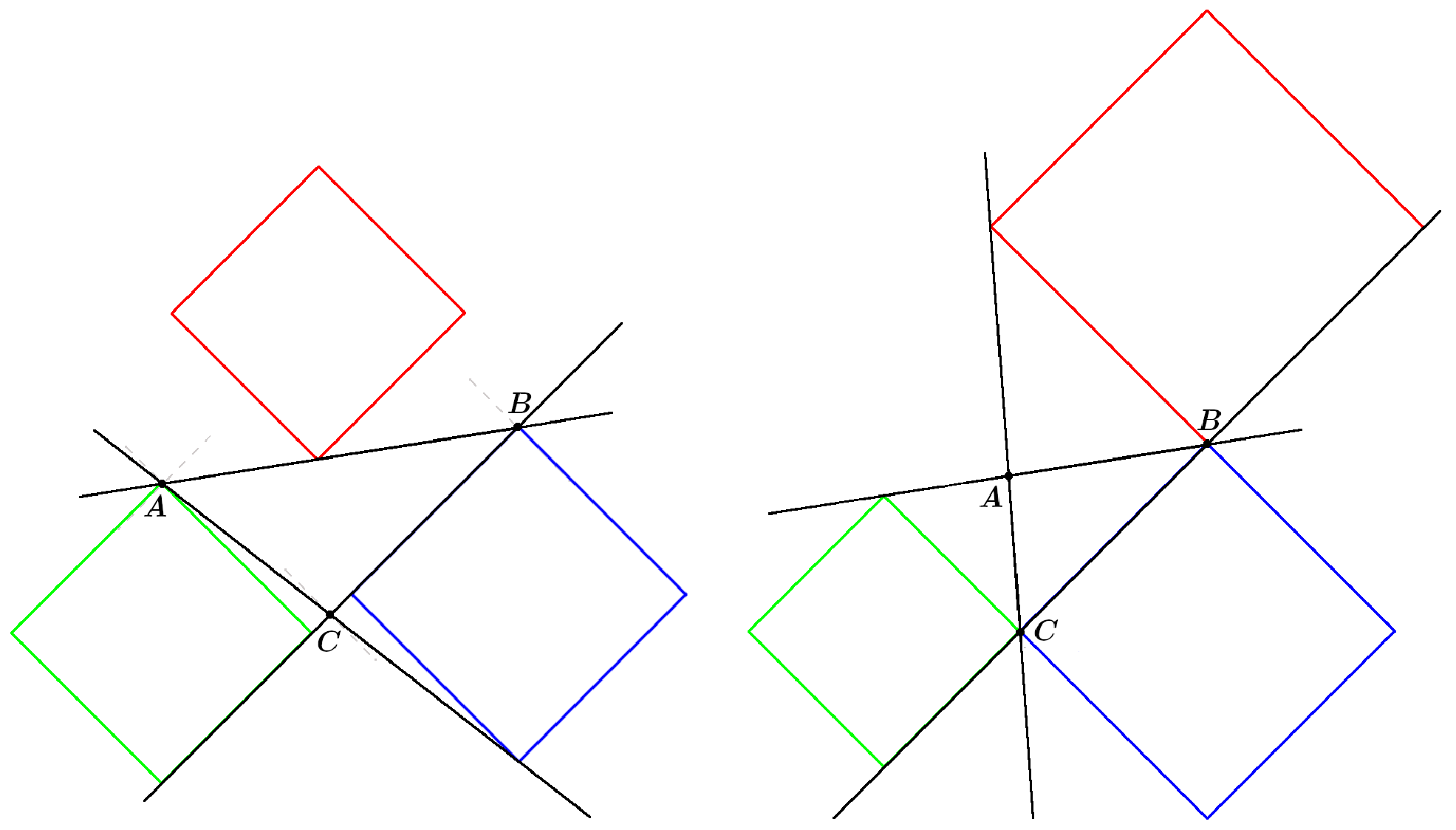}
	\caption{Attempting to construct taxicab excircles (green, blue, and red) for a triangle having two completely inscribed angles in a failing case (left) and a successful case (right).}
\label{fig:excircles-two-comp-insc}
\end{figure}

\begin{lemma}
Excircles cannot be constructed at all sides of an inscribed taxicab triangle that has three completely inscribed angles.
\begin{proof}
Two of the sides must have slope $\pm 1$ (Theorem \ref{thm:three-inscribed}). This type of triangle will suffer the same problem as the failing case of a triangle with two completely inscribed angles.  The left triangle in Figure \ref{fig:excircles-two-comp-insc} just needs to be subtly adjusted so that $\angle C$ is a right angle.  For the side opposite the right angle, the extensions of the triangle’s other sides will diverge away parallel to any possible excircle that could be drawn, and a circle at either vertex will only intersect one of the side extensions.  Therefore, the triangle will only have two excircles.	      
\end{proof}
\end{lemma}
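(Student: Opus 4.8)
The plan is to isolate a single side that cannot carry an excircle --- the side opposite the vertex at which the two diagonal sides meet --- and to show directly that no taxicab circle can serve as its excircle. First I would invoke Theorem~\ref{thm:three-inscribed}: two of the sides, say $\overline{CA}$ and $\overline{CB}$, have slope $+1$ and $-1$ respectively (they cannot share a slope, or the triangle would be degenerate), so the remaining side $\overline{AB}$ has $|m_{AB}|\neq 1$, since otherwise two of the three sides would be parallel. Because $\overline{CA}$ and $\overline{CB}$ have slopes $+1$ and $-1$, the interior of $\triangle ABC$ in a neighbourhood of $C$ fills one of the four quarter-planes bounded by the lines through those two sides, and $\angle C$ is an exact right angle aligned along slopes $\pm 1$; this is precisely the degenerate version of the failing case of Lemma~\ref{lem:excircles-two-comp-insc}.

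Next I would show that $\overline{AB}$, the side opposite $C$, admits no excircle. By Definition~\ref{def:excircle} an excircle of $\overline{AB}$ is a taxicab circle lying outside $\triangle ABC$ and tangent to the two rays forming $\angle C$, hence tangent to the full lines $\ell_{1}\supseteq\overline{CA}$ and $\ell_{2}\supseteq\overline{CB}$, of slope $+1$ and $-1$. The crux is that a line of slope $\pm 1$ is tangent to a taxicab circle only by coinciding with one of that circle's slope-$(\pm 1)$ edges; there is no single-point tangency for diagonal lines. A short coordinate check --- using the edge equations $x-y = p-q\pm r$ and $x+y = p+q\pm r$ for a taxicab circle of radius $r$ centred at $(p,q)$ --- then shows that if such a circle carries a slope-$(+1)$ edge on $\ell_{1}$ and a slope-$(-1)$ edge on $\ell_{2}$, its centre is forced to lie at taxicab distance exactly $r$ from $C=\ell_{1}\cap\ell_{2}$ along a coordinate axis (four sign cases, all equivalent under $90^{\circ}$ rotation). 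Thus $C$ is one of the four vertices of the circle, and the two tangent edges are exactly the two circle-edges meeting at $C$. Consequently the interior of the circle, near $C$, coincides with the quarter-plane wedge on the triangle's side of $C$, i.e.\ with the interior of $\triangle ABC$ near $C$; so the circle is not outside the triangle, contradicting Definition~\ref{def:excircle}. Since $\overline{AB}$ has no excircle, the triangle cannot have excircles on all three sides, which is the assertion of the lemma.

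I expect the delicate points to be: verifying that diagonal-line tangency really forces edge-coincidence; checking that the forced circle genuinely overlaps the triangle's interior rather than merely touching it at the single point $C$; and confirming insensitivity to which of the four directions the triangle opens at $C$ (matched with the four sign cases above, all related by $90^{\circ}$ rotation, which preserves taxicab structure). One should also dispatch the apparent loophole of letting the tangent edge be ``too short'' to reach past $A$ or past $B$: in that case the candidate simply fails to be tangent to the relevant extension and so is not an excircle anyway, and in any event the conclusion that the circle passes through $C$ --- and hence overlaps the triangle --- is independent of the radius. A more visual alternative, matching Figure~\ref{fig:excircles-two-comp-insc}, is to note that $\ell_{1}$ and $\ell_{2}$ run parallel to the edges of every taxicab circle, so inflating a candidate merely slides it along those lines and never lets it catch both extensions at once, while a circle with a vertex placed at $A$ or at $B$ touches only one of the two diverging extensions; this route reaches the same contradiction and could be included as a complement.
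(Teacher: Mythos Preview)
Your main route differs from the paper's. The paper works on the far side of $\overline{AB}$: since the extensions of $\overline{CA}$ and $\overline{CB}$ past $A$ and $B$ have slopes $\pm 1$, they run parallel to the edges of every taxicab circle, so a candidate excircle tangent to $\overline{AB}$ can never catch both diverging extensions, and a circle anchored at $A$ or $B$ reaches only one of them. You instead work on the near side: any taxicab circle tangent to both diagonal lines $\ell_1,\ell_2$ must have $C=\ell_1\cap\ell_2$ as a vertex, whereupon the circle overlaps the triangle near $C$ and violates the ``outside the triangle'' clause of Definition~\ref{def:excircle}. Your coordinate verification that $C$ is forced to be a vertex is correct and makes the obstruction rigorous where the paper's treatment is pictorial. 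Your closing ``visual alternative'' is in fact precisely the paper's own argument.

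There is, however, one step you gloss over. From ``$C$ is one of the four vertices of the circle'' it does \emph{not} follow directly that ``the interior of the circle, near $C$, coincides with the quarter-plane wedge on the triangle's side of $C$.'' Your four sign cases place the circle's interior in the four \emph{different} quadrant-wedges at $C$, and only one of those is the triangle's wedge; the other three give circles that lie outside the triangle but fail for other reasons (they are not tangent to $\overline{AB}$, and their edges lie along the opposite rays from $C$). The missing link is that an excircle must be tangent to the \emph{rays} from $C$ through $A$ and through $B$, not merely to the full lines $\ell_1,\ell_2$: the tangent edges must therefore run from $C$ in the directions of $A$ and $B$, which forces both edges---and hence the wedge of the circle's interior between them---into the triangle's quadrant. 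You invoke ray-tangency at the outset but then drop to line-tangency for the computation; you need to return to ray-tangency to pin down the quadrant. Once that is made explicit, your argument is complete and arguably sharper than the paper's sketch.
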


\section{Taxicab Apollonius Circles}

While the search for taxicab excircles had a relatively clean result in terms of inscribed angles, the existence of an Apollonius circle for those excircles is fraught with an additional complication.  We will take a simple example of a triangle with a vertical side and a horizontal side to illustrate this complication.

\begin{figure}
	\centering
	\includegraphics[width=6cm]{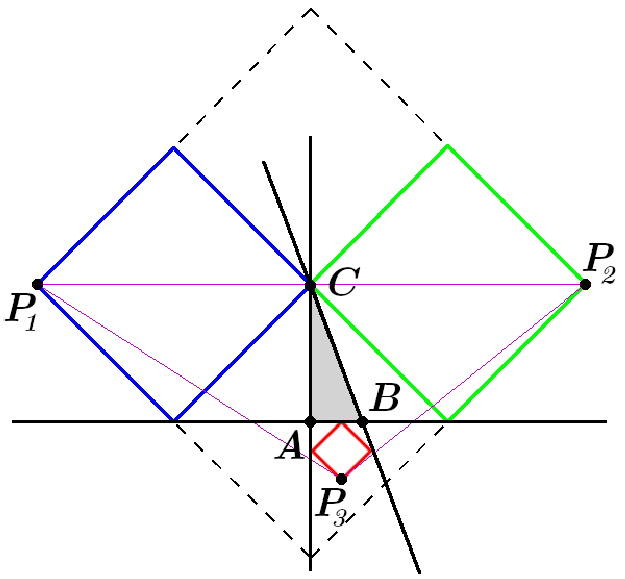}
	\caption{A triangle whose taxicab excircles do not have an enclosing tangential (Apollonius) circle}
	\label{fig:apollonius-hor-vert-fail}
\end{figure}

The triangle shown in Figure \ref{fig:apollonius-hor-vert-fail} fits the conditions of Lemma \ref{lem:excircles-one-comp-insc} and so it has a full complement of excircles.  But, the side $\overline{BC}$ is very steep.  This causes the excircle for side $\overline{AB}$ (red) to be squeezed between the extensions of $\overline{AC}$ and $\overline{BC}$ to the point that it is in a sense “interior” to the other two excircles (blue and green).  The effect of this is that any circle tangent to the excircles for $\overline{AC}$ and $\overline{BC}$ (dashed black line) will not be tangent to the excircle for $\overline{AB}$; the third small excircle will always be on the interior of the enclosing circle.

The technical reason that a taxicab Apollonius circle does not exist in this case is as follows.   Choose points $P_1$, $P_2$, and $P_3$, one on each excircle as shown.  Then $\angle P_1P_3P_2$ is not inscribed (it fully encompasses a right angle aligned along lines of slope $\pm 1$) and therefore $\triangle P_1P_3P_2$ (purple) is not inscribed.  By the Three-Point Circle Theorem (Theorem \ref{thm:three-point}) there is no taxicab circle passing through these three points.

If the slope of $\overline{BC}$ is reduced sufficiently, the enclosing circle can be made tangent (Figure  \ref{fig:apollonius-success}).  But, if the slope is reduced to unity, then the triangle will have three inscribed angles and therefore fail to have a full complement of excircles.  So, there is a bounded range for the slope of $\overline{BC}$ in which an Apollonius circle exists.

\begin{figure}
	\centering
	\includegraphics[width=6cm]{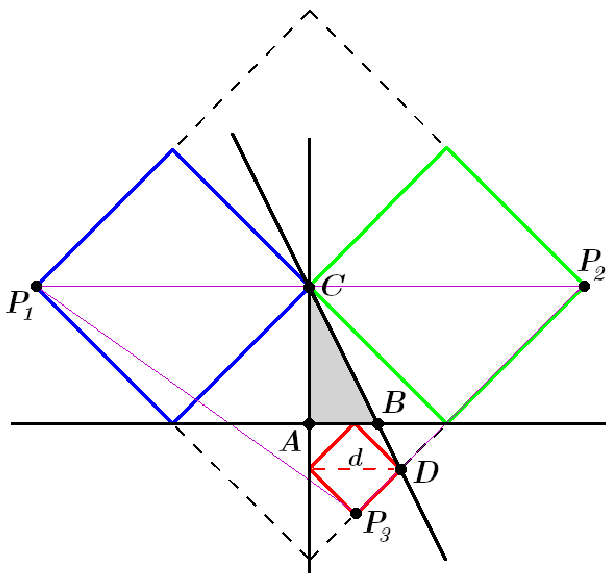}
	\caption{A triangle whose taxicab excircles (barely) have an enclosing tangential (Apollonius) circle}
	\label{fig:apollonius-success}
\end{figure}

To find the slope range for $\overline{BC}$, notice that the point $D$ in Figure \ref{fig:apollonius-success} is the crucial point.  In order for $\angle P_1P_3P_2$ to be inscribed (so that the points form an inscribed triangle and the Three-Point Circle Theorem can be applied to construct the Apollonius circle), $D=\left(d,-\frac{1}{2}d\right)$ must lie on the extension of $\overline{BC}$ and \textit{on or to the right of} the extension of the lower right section of the excircle of side $\overline{BC}$ (green).  Using the notation $(x_*,y_*)$ for the coordinates of the vertices of the triangle, the equation for the extension of the excircle for side $\overline{BC}$ (green) is $y=x-y_c$ which using the point $D$ on $\overline{BC}$ leads to the constraint $-\frac{1}{2} d \le d-y_c$ or $y_c \le \frac{3}{2}d$.

The equation for the line through $\overline{BC}$ with slope $m_a$ is $y=-m_ax+y_c$ which for point $D$ gives $-\frac{1}{2}d = m_a d + y_c$. When combined with the inequality above leads to the constraint
\begin{align}
	y_c = -(m_a+ \frac{1}{2})d & \le \frac{3}{2}d \nonumber \\
	m_a & \ge -2 \nonumber
\end{align}

Therefore, in this orientation, the slope of $\overline{BC}$ is constrained to $-2\le m_a < -1$ in order for the taxicab Apollonius circle to exist.  This analysis provides a template for proving all four cases (for the hypotenuse slope to be steep vs. shallow and positive vs. negative) of the following lemma and also outlines the kind of strategy necessary for handling other types of triangles leading up to a general Triangle Apollonius Circle Theorem.  Note that this result agrees with Corollary 2.1 in \cite{Ermis}.

\begin{lemma}\label{lem:apollonius-hor-vert}
A triangle with a horizontal side and a vertical side has an Apollonius circle tangentially encompassing its taxicab excircles if the slope $m$ of the side opposite the right angle satisfies $\frac{1}{2} \le |m| < 1$ or $1<|m|\le 2$.
\end{lemma}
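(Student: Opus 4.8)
The plan is to prove all four cases by the same symmetry-reduction-plus-computation strategy already demonstrated in the worked example preceding the lemma, treating that computation as the template for the case $m_a = m \in (-2,-1)$ (steep, negative hypotenuse) with the right angle at $C$ and legs horizontal and vertical. First I would set up coordinates once and for all: place the right-angle vertex so that the horizontal and vertical legs lie along the axes, and write $A=(x_a,y_a)$, $B=(x_b,y_b)$, $C=(x_c,y_c)$ with $\overline{AB}$ the hypotenuse and $\overline{BC}$, $\overline{AC}$ the legs. By Lemma \ref{lem:excircles-one-comp-insc} the triangle (having exactly one completely inscribed angle, namely the right angle, together with one strictly positively and one strictly negatively inscribed angle at the acute vertices) already possesses all three excircles, so the only thing to establish is the existence of the enclosing tangential circle.

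The conceptual core is the reduction, already spelled out in the text, of "Apollonius circle exists" to "$\triangle P_1P_3P_2$ is inscribed" via the Three-Point Circle Theorem (Theorem \ref{thm:three-point}), where $P_i$ is the distinguished tangency/extreme point on each excircle. So the key steps are: (1) identify the three excircles explicitly — one genuine excircle tangent to the hypotenuse side, and two that are "pinned" at the right-angle vertex $C$ (the circles whose top/bottom vertex sits at $C$, as in Lemma \ref{lem:excircles-one-comp-insc}); (2) identify the critical point $D$ on the extension of the hypotenuse (the analogue of $D=(d,-\tfrac12 d)$ in Figure \ref{fig:apollonius-success}) at which the angle $\angle P_1P_3P_2$ transitions between inscribed and not-inscribed; (3) write down the line containing the relevant $\pm1$-sloped edge of the hypotenuse-side excircle and the line containing the hypotenuse itself, intersect/compare them at $D$ exactly as in the worked computation, and read off the inequality on $m$. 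Carrying this out for the steep negative case reproduces $-2 \le m < -1$; the remaining three cases ($1 < m \le 2$, $\tfrac12 \le m < 1$, $-1 < m \le -\tfrac12$) follow by reflecting the configuration across a vertical line, a horizontal line, or a $\pm1$-diagonal — each such reflection is an isometry of taxicab geometry that sends excircles to excircles and inscribed triangles to inscribed triangles, and it negates or reciprocates the hypotenuse slope appropriately, so the four intervals $\tfrac12 \le |m| < 1$ and $1 < |m| \le 2$ emerge as the union of the four reflected copies of the one interval computed directly.

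I would therefore present the argument as: fix the coordinates; invoke Lemma \ref{lem:excircles-one-comp-insc} for existence of the excircles; reproduce the worked computation as the proof of the representative case, stating the two line equations, the point $D$, and the resulting inequality; then remark that the three remaining orientations are obtained from this one by the taxicab isometries listed above and hence give the symmetric intervals, so that collectively $\tfrac12 \le |m| < 1$ or $1 < |m| \le 2$.

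The main obstacle I anticipate is step (2): pinning down precisely which point $D$ (equivalently, which pair of edges of which two excircles) governs the inscribed/not-inscribed transition, and verifying that this is the binding constraint rather than one of the several other ways $\angle P_1P_3P_2$ could open past a $\pm1$-aligned right angle. In the horizontal/vertical case the geometry conveniently forces two of the excircles to be pinned at $C$, which collapses the analysis to the single inequality involving $D$ and the green excircle's diagonal edge; I would need to confirm that this pinning persists (it does, by the slope bounds in Lemma \ref{lem:excircles-one-comp-insc}) and that no competing constraint from the other excircle pair is tighter. Once the correct $D$ and the correct bounding line are identified, the remainder is the short linear computation already exhibited, and the symmetry argument is routine.
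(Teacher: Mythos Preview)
Your overall strategy---take the worked computation immediately preceding the lemma as the proof of one representative orientation, then transport it to the other three via taxicab isometries (reflections in the axes and in the $\pm 1$ diagonals)---is exactly the paper's approach; the paper phrases the second step simply as ``this analysis provides a template for proving all four cases.''  The reduction of ``Apollonius circle exists'' to ``$\triangle P_1P_3P_2$ is inscribed'' via Theorem~\ref{thm:three-point} is likewise the paper's mechanism.

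There is one genuine slip in your setup, however.  The right angle at the horizontal/vertical corner is \emph{not} the completely inscribed angle: a right angle bounded by a horizontal ray and a vertical ray always contains exactly one of the two diagonal directions (slope $+1$ or slope $-1$) in its interior, so it is only strictly inscribed, never completely.  The completely inscribed angle is the acute angle at the vertex where the hypotenuse meets the vertical leg (when $|m|>1$) or the horizontal leg (when $|m|<1$).  By Lemma~\ref{lem:excircles-one-comp-insc} the two pinned excircles therefore sit at that acute vertex, not at your right-angle vertex $C$, and the ``free'' excircle---the one whose extreme vertex furnishes the critical point $D$ in the computation---is the one opposite the completely inscribed angle, i.e.\ the excircle on one of the legs, not on the hypotenuse.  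This misidentification does not alter your strategy, but the specific picture you describe in step~(1) is wrong, and carrying it forward would misplace $D$ and the bounding $\pm 1$ edge in step~(2) so that the linear inequality in step~(3) would not come out as stated.
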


Before dealing with triangles having other orientations and shapes, it should be demonstrated that the conclusions in \cite{Ermis} are not quite correct.  Consider the triangle formed by lines of slope $\frac{3}{4}$, $\frac{1}{5}$, and 3 shown with its Apollonius circle (orange) in Figure \ref{fig:ermis-fail}.  The value of $\rho$ in Theorem 2.2 of \cite{Ermis} for this triangle is either -0.1 or -10 depending on the choice of A and B, which in and of itself poses a problem since the theorem poses no restrictions on the choice of $A$ and $B$ yet the choice changes the result.  But furthermore, the triangle satisfies branch 2 of part i) of this theorem which indicates that $\rho$ must be positive in order for the taxicab Apollonius circle to exist.

\begin{figure}[t!]
	\centering
	\includegraphics[width=6cm]{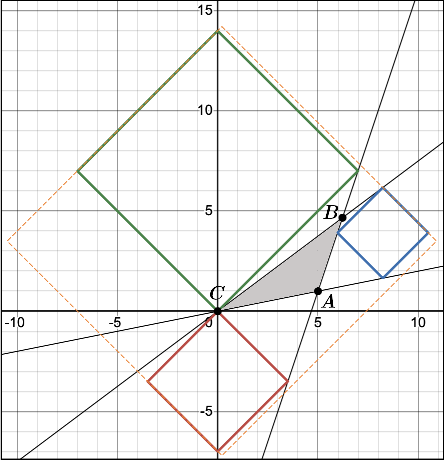}
	\caption{Example of a triangle with a taxicab Apollonius circle (orange) that violates the conditions of Theorem 2.2 of \cite{Ermis}}
	\label{fig:ermis-fail}
\end{figure}

We will now attempt to find an Apollonius circle for a more general triangle.  Consider a triangle formed by two shallow lines with same signed slope as in Figure \ref{fig:apollonius-one-comp-insc}, assuming without loss of generality that $0\le m_b<m_a<1$.  The $\angle C$ formed at the intersection of the lines is completely inscribed.

Before continuing, consider any triangle with one completely inscribed angle formed by two lines whose slopes are both steep or both shallow and have the same sign.  Such a triangle is always equivalent to a triangle positioned in the lower half of the first quadrant.  If the axes are relabeled, or equivalently if the triangle is reflected about the lines $y=\pm x$ and/or $x=0$ and/or $y=0$ and possibly translated, the triangle can be “moved” to the lower half of the first quadrant with the completely inscribed angle at the origin.  Therefore, our analysis will be limited to the possible cases in the lower half of the first quadrant as in Figure \ref{fig:apollonius-one-comp-insc}.

In order for excircles to exist, there are constraints on the slope $m_c$ of side $\overline{AB}$.  If $0<m_c<1$ (purple dashed line) then $\angle B$ will be a second completely inscribed angle but the triangle will not satisfy Lemma \ref{lem:excircles-two-comp-insc} and therefore will not have a full complement of excircles.  If $-1<m_c<0$ (other purple dashed line) then $\angle A$ is again a second completely inscribed angle, but the triangle will still not satisfy Lemma \ref{lem:excircles-two-comp-insc} and will not have a full complement of excircles. So, we must in the end include the restriction $|m_c|>1$. Note that the triangle is minimally inscribed.

\begin{figure}
	\centering
	\includegraphics[width=8cm]{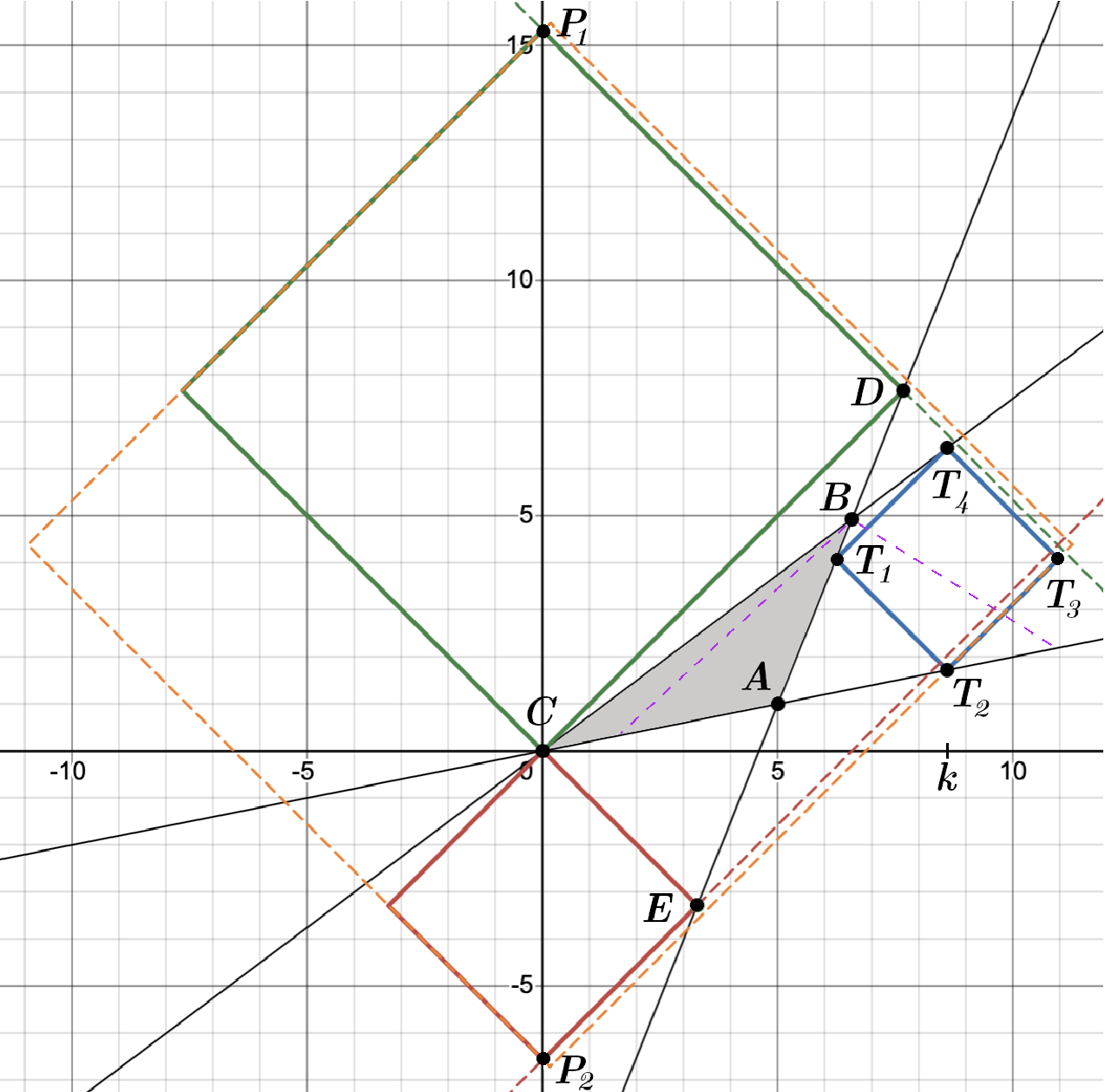}
	\caption{Constructing excircles for a triangle contained in one quadrant with one completely inscribed angle and a positive steep-sloped opposite side.}
	\label{fig:apollonius-one-comp-insc}
\end{figure}

Using Figure \ref{fig:apollonius-one-comp-insc} as a guide, the radius of the excircle for $\overline{AB}$ is $r=\frac{1}{2}k(m_a-m_b)$ and so the coordinates of the vertices of the excircle are:
\begin{align}
	T_2 & =(k,m_b k) \nonumber \\
	T_4 & =(k,m_a k) \nonumber \\
	T_1 & = (k-r,m_ak-r) = \left(k+\frac{k}{2}(m_a+m_b),\frac{k}{2}\left(m_a+m_b\right)\right) \nonumber \\
	T_3 & = (k+r,m_ak-r) = \left(k+\frac{k}{2}(m_a-m_b),\frac{k}{2}\left(m_a+m_b\right)\right) \nonumber
\end{align}

\noindent Since $T_1$ is on the side $\overline{AB}$ which has equation $y=m_c(x-x_a)+y_a$, we can substitute its coordinates and solve for $k$ to get

\[
k=\frac{2(y_a-x_am_c)}{m_a+m_b+m_am_c-m_bm_c-2m_c}
\]

Now, $\angle P_1T_3P_2$ must be inscribed in order for the three points to lie on a taxicab circle (Three-Point Circle Theorem).  In order for this to be the case, $T_3$ cannot be interior to the corner formed by the red and green extensions of the excircles for $\overline{AC}$ and $\overline{BC}$, meaning $T_3$ is either above the green dashed line or below the red dashed line for $\angle P_1T_3P_2$ to be inscribed.\footnote{Theorem 2.2 in \cite{Ermis} mistakenly requires both conditions.}

Taking individually the constraints that will yield an Apollonius circle, first consider the green dashed line.  Point $D$ is the intersection of  $y=x$ with the extension of $\overline{AB}$ with equation $y=m_c(x-x_a)+y_a$.  This yields

\[
D=\left(\frac{y_a-x_a m_c}{1-m_c},\frac{y_a-x_am_c}{1-m_c}\right)
\]

\noindent So, the equation of the green dashed line is

\[
y=-x+\frac{2(y_a-x_a m_c)}{1-m_c}
\]

\noindent For $T_3$ to be above the green dashed line, $T_4$ would have to be as well.  Using this point for its simpler coordinates, its $y$-value must be greater than the equation for the green line: 
\[
m_a k\ge -k+\frac{2(y_a-x_am_c)}{1-m_c}
\]

\noindent Noting that $m_a+1>0$, solving for $k$ gives
\[
k\ge \frac{2(y_a-x_am_c)}{(1-m_c)(m_a+1)}
\]

\noindent Substituting for $k$:
\[
\frac{2(y_a-x_am_c)}{m_a+m_b+m_am_c-m_bm_c-2m_c}\ge \frac{2(y_a-x_am_c)}{(1-m_c)(m_a+1)}
\]

\noindent Since $k>0$ in this situation, the left denominator must have the same sign as the left numerator.  For $m_c>1$ all fraction parts are negative; for $m_c<-1$ all fraction parts are positive.  Taking this into account, after simplification we have:
\begin{align}
	m_b+m_c(2m_a-m_b-1) \ge 1, & \,\,\, m_c>1 \nonumber \\
	m_b+m_c(2m_a-m_b-1) \le 1, & \,\,\, m_c<-1 \nonumber
\end{align}

\noindent Dividing through either inequality by $m_c$ yields one condition that a triangle with $0\le m_b<m_a<1$ can satisfy in order to have a taxicab Apollonius circle:
\[
\frac{m_b}{m_c}+2m_a-m_b-1 \ge \frac{1}{m_c}
\]

For the alternate constraint, point $T_2$ would have to be below the red dashed line.  The point $E$ is the intersection of $y = -x$ with $y=m_c(x-x_a)+y_a$ which yields
\[
E=\left(\frac{x_a m_c-y_a}{m_c+1},\frac{y_a-x_a m_c}{m_c+1}\right)
\]

\noindent The equation of the red dashed line is therefore
\[
y=x+\frac{2(y_a-x_a m_c)}{m_c+1}
\]

\noindent So, with $T_2$ below the line we have
\[
m_b k\le k+\frac{2(y_a-x_am_c)}{m_c+1}
\]

\noindent Solving for $k$ ($m_b-1<0$) and substituting gives
\[
\frac{2(y_a-x_am_c)}{m_a+m_b+m_am_c-m_bm_c-2m_c}\ge \frac{2(y_a-x_am_c)}{(m_c+1)(m_b-1)}
\]

\noindent The sign analysis has not changed so simplifying yields
\[
1-m_a+2m_b-\frac{m_a}{m_c} \le \frac{1}{m_c}
\]

\noindent This is the second condition that a triangle with $0\le m_b < m_a < 1$ can satisfy in order to have a taxicab Apollonius circle.  Note that combining the two constraints yields the overly restrictive function $\rho$ in Theorem 2.2 of \cite{Ermis}.

As an example, the triangle in Figure \ref{fig:apollonius-one-comp-insc} has slopes $\frac{3}{4}$, $\frac{1}{5}$, and $\frac{5}{2}$.  It satisfies the second constraint ($0.35 \le 0.4$) but not the first ($0.38 \not\ge 0.4$) which confirms what is visually seen in the figure.

For a minimally inscribed triangle, the only remaining case is when the signs of $m_a$ and $m_b$ differ, namely $-1<m_b<0<m_a<1$.  But, even in this case the items of consequence ($m_a+1>0$, $m_b-1<0$, and $k>0$) in the analysis still hold up even if a relabeling of the points must occur.  Therefore, we have the following result.

\begin{lemma}\label{lem:apollonius-one-comp-insc}
A minimally inscribed triangle with shallow slope sides $0<m_a<1$ and $-1<m_b<m_a$ forming the completely inscribed $\angle C$ has an Apollonius circle tangentially encompassing its taxicab excircles if one of the following conditions holds:
\begin{enumerate}
\item for $0<|m_c|<\infty$: $\frac{m_b}{m_c}+2m_a-m_b-1 \ge \frac{1}{m_c}$ or $1-m_a+2m_b-\frac{m_a}{m_c} \le \frac{1}{m_c}$
\item for $\overline{AB}$ vertical: $2m_a-m_b \ge 1$
\end{enumerate}

\noindent where $m_a$ and $m_b$ are the slopes of the sides opposite $\angle A$ and $\angle B$, respectively.
\end{lemma}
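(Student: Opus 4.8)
The plan is to carry out in full generality the computation templated above for the horizontal/vertical case, reducing everything to a single application of the Three-Point Circle Theorem.

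\textbf{Step 1 (canonical position).} As observed just before the statement, any triangle with a single completely inscribed angle formed by two shallow same-sign slopes can be carried by a composition of reflections across $y = \pm x$, $x = 0$, $y = 0$ and a translation to the configuration of Figure~\ref{fig:apollonius-one-comp-insc}: the completely inscribed $\angle C$ at the origin, the triangle in the lower half of Quadrant~$I$, and $0 \le m_b < m_a < 1$. These maps are taxicab isometries, so they preserve circles, tangency, and the inscribed status of every angle; hence the existence of an Apollonius circle is invariant and it suffices to treat this position. By Lemmas~\ref{lem:excircles-one-comp-insc} and \ref{lem:excircles-two-comp-insc}, a full complement of excircles forces $|m_c| > 1$, since a shallow $\overline{AB}$ would create a second completely inscribed angle without meeting the hypothesis of Lemma~\ref{lem:excircles-two-comp-insc}; the triangle is then automatically minimally inscribed and all three excircles are available.

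\textbf{Step 2 (the binding excircle and the Three-Point Circle reduction).} Because $\angle C$ is completely inscribed, a taxicab circle interior to it can be inflated until tangent to both rays; writing its tangency points as $(k, m_b k)$ and $(k, m_a k)$ gives radius $r = \tfrac12 k(m_a - m_b)$ and the four vertices $T_1, T_2, T_3, T_4$ recorded above. Forcing $T_1 \in \overline{AB}$, whose line is $y = m_c(x - x_a) + y_a$, pins $k$ to the displayed closed form. The key structural step is to invoke the Three-Point Circle Theorem (Theorem~\ref{thm:three-point}): an Apollonius circle passing through one point of each excircle exists exactly when those three points can be chosen so that the triangle they span is inscribed, and the worst-placed point is $T_3$, the outward vertex of the small excircle on $\overline{AB}$. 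One checks that the other two vertices of $\triangle P_1 T_3 P_2$ are automatically inscribed, so the condition collapses to: $\angle P_1 T_3 P_2$ is inscribed, which by the characterization of non-inscribed angles in Section~2 holds precisely when $T_3$ is \emph{not} strictly interior to the right-angle corner cut out by the slope-$(-1)$ extension of the excircle on $\overline{BC}$ (green) and the slope-$(+1)$ extension of the excircle on $\overline{AC}$ (red); that is, $T_3$ lies on or above the green dashed line, or on or below the red dashed line.

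\textbf{Step 3 (translating each half-plane into algebra, and the remaining cases).} For the green condition, intersect $y = x$ with line $\overline{AB}$ to get the corner point $D$ and hence the green line $y = -x + \tfrac{2(y_a - x_a m_c)}{1 - m_c}$; requiring the simpler vertex $T_4$, and hence $T_3$, to lie on or above it yields a one-sided bound on $k$, and substituting the closed form of $k$ produces, after clearing both denominators, an inequality whose direction flips with the sign of $m_c$ (for $m_c > 1$ every fractional block is negative, for $m_c < -1$ every block is positive); dividing through by $m_c$ merges the two cases into $\tfrac{m_b}{m_c} + 2m_a - m_b - 1 \ge \tfrac{1}{m_c}$. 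The red condition is the mirror computation: intersecting $y = -x$ with $\overline{AB}$ gives $E$, the red line $y = x + \tfrac{2(y_a - x_a m_c)}{m_c + 1}$, and requiring $T_2$ to lie on or below it gives $1 - m_a + 2m_b - \tfrac{m_a}{m_c} \le \tfrac{1}{m_c}$ after the same sign bookkeeping; either inequality suffices, which is item~(1). Item~(2) is the limit $m_c \to \pm\infty$ (vertical $\overline{AB}$) of the first inequality, leaving $2m_a - m_b \ge 1$. Finally, for the sign pattern $-1 < m_b < 0 < m_a < 1$, the only facts used in Steps~2–3 — namely $m_a + 1 > 0$, $m_b - 1 < 0$, $k > 0$, and the uniform sign of the fractional blocks — persist, possibly after swapping the labels $A \leftrightarrow B$, so the same two inequalities cover that case. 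The algebra is routine; the delicate point is Step~2 — justifying cleanly that ``an Apollonius circle exists'' is equivalent to ``$\angle P_1 T_3 P_2$ can be made inscribed.'' This needs an argument that $T_3$ is genuinely the extremal choice among all points on the three excircles, so that no cleverer choice of $P_1, P_2, P_3$ rescues an otherwise impossible case, and that when $T_3$ is admissibly placed the circle through $P_1, T_3, P_2$ furnished by the Three-Point Circle Theorem is actually tangent to, not merely incident with, all three excircles.
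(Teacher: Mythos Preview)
Your proposal follows the paper's argument essentially verbatim: the same canonical positioning, the same excircle vertices $T_1,\dots,T_4$ and closed form for $k$, the same reduction via the Three-Point Circle Theorem to whether $\angle P_1T_3P_2$ is inscribed, and the same pair of half-plane computations through $D$ and $E$ yielding the two inequalities, with the vertical case recovered as the $m_c\to\pm\infty$ limit and the mixed-sign case handled by the persistence of the sign facts $m_a+1>0$, $m_b-1<0$, $k>0$.

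The concern you flag at the end---that one must argue $T_3$ is genuinely the extremal choice and that the circle supplied by the Three-Point Circle Theorem is tangent rather than merely incident to the excircles---is well placed, and the paper does not address it any more rigorously than you do; it relies on the figure and the informal observation that the small excircle becomes ``interior'' to the other two precisely when $T_3$ falls into the forbidden corner.
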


To establish the constraint for the $\overline{AB}$ vertical case let $m_c \to \pm \infty$ in the non-vertical case. Then the first constraint simplifies to $2m_a-m_b \ge 1$.  Also note that Lemma \ref{lem:apollonius-hor-vert} is a special case of Lemma \ref{lem:apollonius-one-comp-insc} when limited to the first quadrant.

For a triangle with two completely inscribed angles, we already know that the triangle must have one side with slope $\pm 1$ (Theorem \ref{thm:two-inscribed}) and by Lemma \ref{lem:excircles-two-comp-insc} the other sides must have opposing slopes - one shallow and one steep.  Assume $|m_b|<1$ and $|m_c|>1$.  The basic constraint process outlined above for minimally inscribed triangles works here as well.  If we simply apply the first constraint of Lemma \ref{lem:apollonius-one-comp-insc} with $m_a=1$ we find that $m_c \ge 1$ (for $m_c>1$) or $m_c \le 1$ (for $m_c<-1$) which we have already required in order to even have a full complement of excircles.  Visually, in Figure \ref{fig:apollonius-two-comp-insc}, the side $\overline{AB}$ can have any slope $|m_c|>1$ and the excircles for $\overline{AB}$ (blue) and $\overline{BC}$ (green) are unaffected leaving the excircle for $\overline{AB}$ always to the right of the extension of the excircle for $\overline{BC}$, which is exactly what the first constraint of Lemma \ref{lem:apollonius-one-comp-insc} is testing.  Therefore, a triangle with two completely inscribed angles that has a full complement of excircles will always have a taxicab Apollonius circle.

\begin{figure}
	\centering
	\includegraphics[width=6cm]{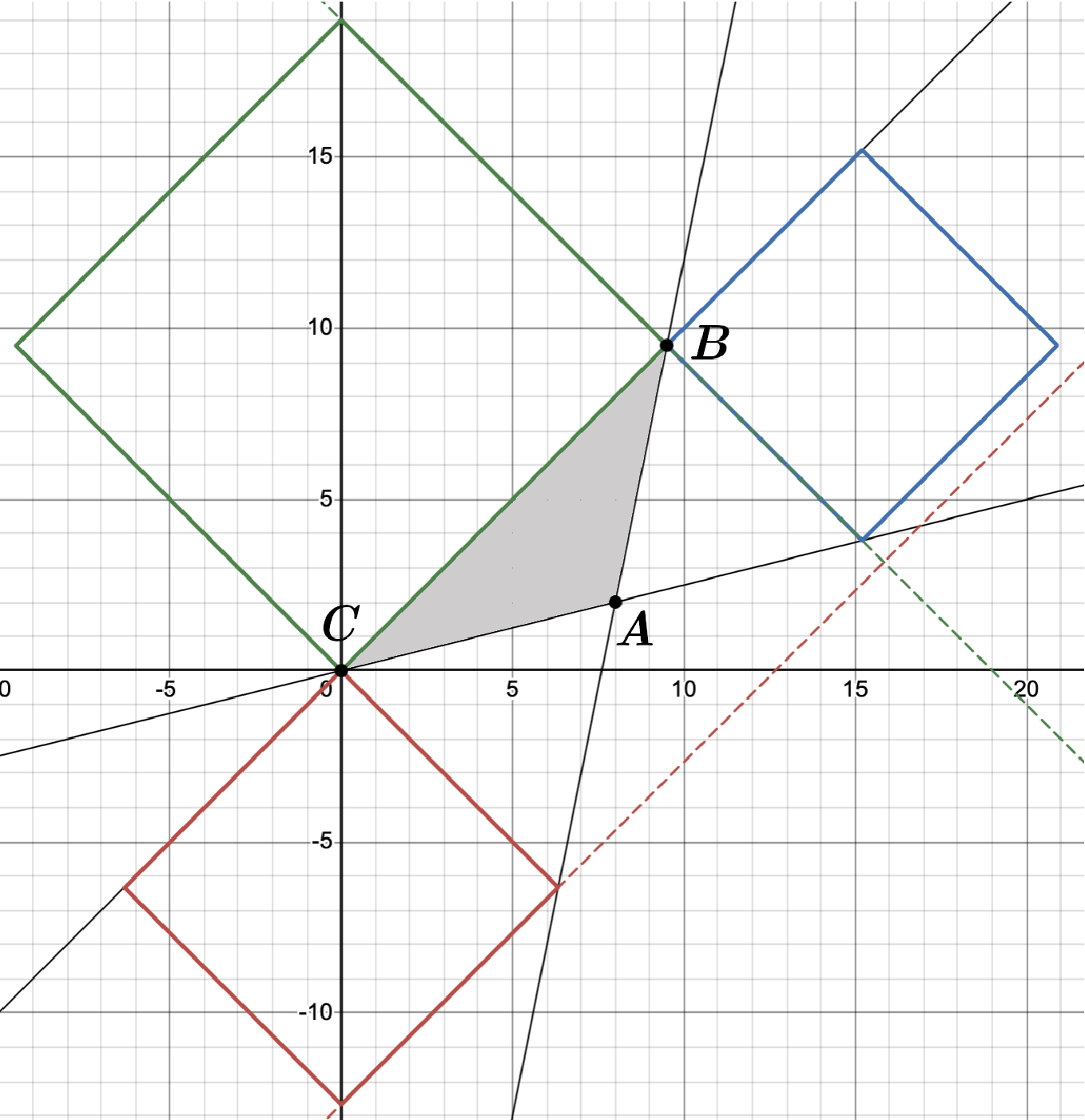}
	\caption{Excircles for a triangle with two completely inscribed angles}
	\label{fig:apollonius-two-comp-insc}
\end{figure}

\begin{theorem}[Triangle Apollonius Circle Theorem]
A $\triangle ABC$ has a taxicab Apollonius circle if it is transformationally equivalent to one of the following triangles:
\begin{enumerate}
\item a minimally inscribed triangle with shallow slope sides $0<m_a<1$ and $-1<m_b<m_a$ forming the completely inscribed $\angle C$ with either
\begin{enumerate}
\item $1<|m_c|<\infty$ and either $\frac{m_b}{m_c}+2m_a-m_b-1 \ge \frac{1}{m_c}$ or $1-m_a+2m_b-\frac{m_a}{m_c} \le \frac{1}{m_c}$, or
\item $\overline{AB}$ vertical and $2m_a-m_b \ge 1$
\end{enumerate}
\item a triangle with exactly two completely inscribed angles and non-diagonal sides with opposing slopes (one shallow, one steep)
\end{enumerate}
\end{theorem}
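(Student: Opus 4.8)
The plan is to assemble the theorem from the structural results already in hand, organizing the argument around the number of completely inscribed angles of the triangle. First I would invoke Lemma \ref{lem:not-inscribed}: if any angle of $\triangle ABC$ fails to be inscribed, the side opposite it has no excircle, so there is no Apollonius circle. Hence it suffices to treat inscribed triangles, and by the theorem that an inscribed triangle has at least one completely inscribed angle, together with Theorem \ref{thm:two-inscribed} and Theorem \ref{thm:three-inscribed}, there are exactly three cases to dispatch: one completely inscribed angle (the minimally inscribed case), exactly two, and exactly three.

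The three-completely-inscribed case is immediate: by the last lemma of Section 3 such a triangle cannot carry a full complement of excircles, so it is correctly absent from the statement. For the two-completely-inscribed case, Lemma \ref{lem:excircles-two-comp-insc} forces the two non-diagonal sides to have opposing steep/shallow slopes, giving the hypothesis of item 2; and the discussion immediately preceding the theorem shows that once this holds, the first constraint of Lemma \ref{lem:apollonius-one-comp-insc} specialized to $m_a=1$ reduces to an inequality already implied by the existence of excircles, so the Apollonius circle always exists. For the minimally inscribed case I would note, as the text argues, that the excircle conditions force $|m_c|>1$, so Lemma \ref{lem:apollonius-one-comp-insc} applies verbatim and delivers exactly the slope inequalities of item 1(a), with the limit $m_c\to\pm\infty$ producing item 1(b).

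The remaining ingredient is justifying the phrase ``transformationally equivalent.'' Here I would observe that the relevant maps — reflections in the coordinate axes and in the lines $y=\pm x$, composed with translations — are taxicab isometries, hence carry taxicab circles to taxicab circles and preserve tangency, so the existence of the excircles and of an enclosing tangential circle is invariant under them. It then remains to check that any inscribed triangle whose completely inscribed angle has both adjacent sides shallow (or, after a diagonal reflection, both steep) and of the same sign can be translated and reflected into the lower half of the first quadrant with that angle at the origin, matching Figure \ref{fig:apollonius-one-comp-insc}; and similarly for the two-completely-inscribed case with the diagonal side named $\overline{BC}$. The mixed-sign subcase $-1<m_b<0<m_a<1$ is absorbed by the remark that the only facts the derivation actually uses — $m_a+1>0$, $m_b-1<0$, and $k>0$ — survive a relabeling of the vertices.

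The main obstacle I anticipate is precisely this bookkeeping of transformational equivalence: keeping the orientation conventions (which side is ``steep,'' which angle sits at the origin, which half of the quadrant the triangle occupies) consistent across every sign-and-slope subcase, and verifying that no inscribed triangle with a full complement of excircles is dropped when one passes to the canonical form. The geometric content is already carried by the lemmas; the theorem is essentially their careful union, so the work is organizational rather than computational.
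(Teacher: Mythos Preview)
Your proposal is correct and mirrors the paper's approach exactly: the theorem is stated as the synthesis of the preceding lemmas, with the case split on the number of completely inscribed angles, the elimination of the three-completely-inscribed case via the excircle lemma, the $m_a=1$ specialization for the two-completely-inscribed case, the direct appeal to Lemma \ref{lem:apollonius-one-comp-insc} (with the $|m_c|>1$ restriction forced by the excircle analysis) for the minimally inscribed case, and the reflection/translation argument for transformational equivalence. Your identification of the bookkeeping of canonical forms as the main residual difficulty is also precisely where the paper's treatment is most informal.
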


\begin{figure}[h]
	\centering
	\includegraphics[width=5.5cm]{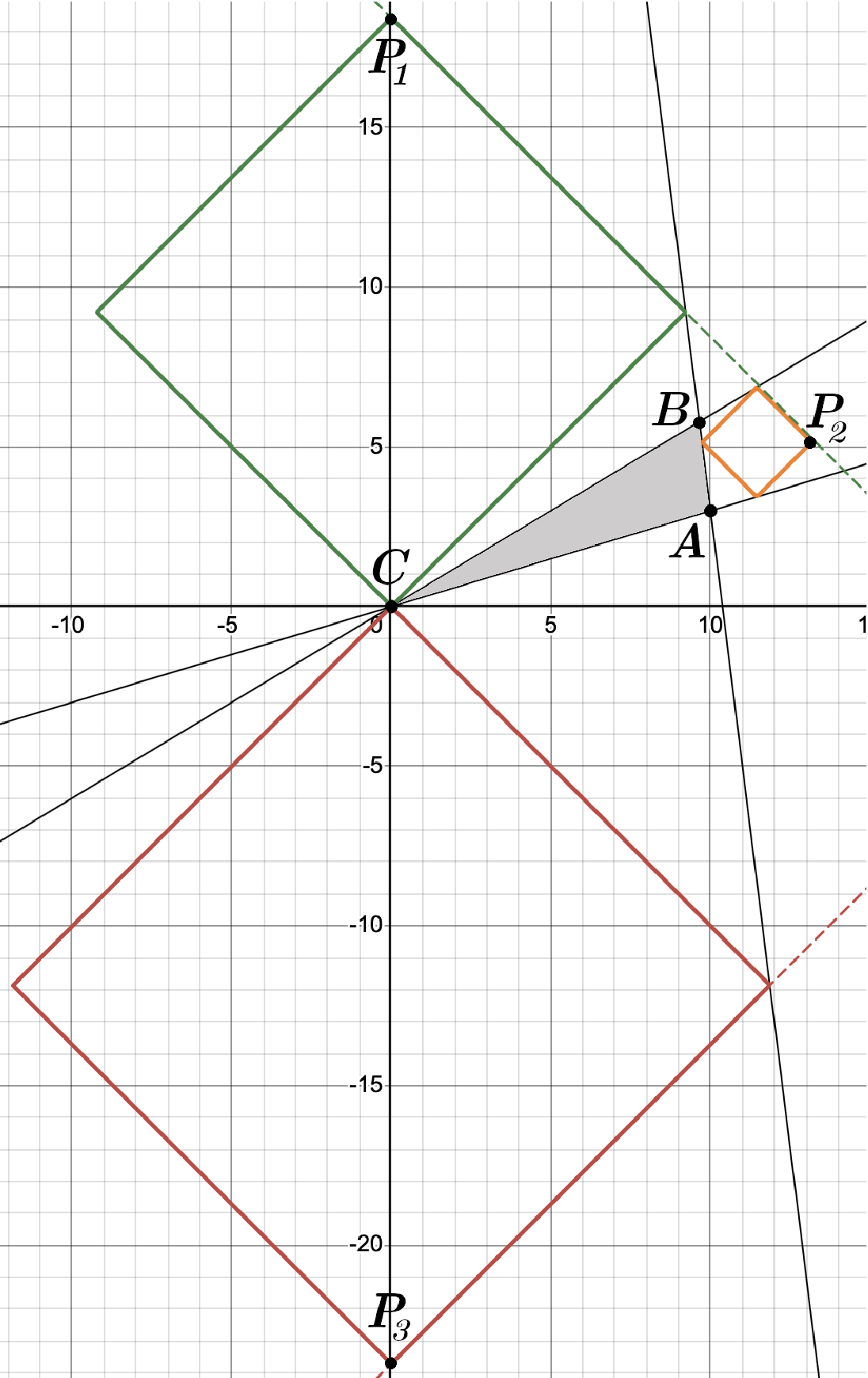}
	\caption{A triangle with a full complement of taxicab excircles that fails to have a taxicab Apollonius circle}
	\label{fig:apollonius-fail}
\end{figure}

Before concluding, let’s examine a general case where a taxicab Apollonius circle fails to exist. The triangle in Figure \ref{fig:apollonius-fail} has $m_a=\frac{3}{5}$, $m_b=\frac{3}{10}$, and $m_c=-8$.  It satisfies neither the first constraint ($-0.1375 \not\ge -0.125$) where the excircle for $\overline{AB}$ is narrowly below the dashed green line nor the second ($1.075 \not\le -0.125$) where is it far above the dashed red line.  The points $P_1$, $P_2$, and $P_3$ fail to form an inscribed triangle and so a taxicab circle through the points does not exist.

Other interesting questions can be asked such as: for which values of $m_a$ and $m_b$ will any choice of $m_c$ lead to a triangle with a taxicab Apollonius circle, or a lack of an Apollonius circle?  Solving the first and second constraints for $m_c>1$ in Lemma \ref{lem:apollonius-one-comp-insc} and taking care to consider only $2m_a-m_b-1>0$ for the moment, we get 
\begin{center}
$m_c\ge \frac{1-m_b}{2m_a-m_b-1}$ and $m_c \le \frac{m_a+1}{1-m_a+2m_b}$
\end{center}

\noindent If we wish to find where there is an Apollonius circle for any choice of $m_c$ then we would want
\[
\frac{1-m_b}{2m_a-m_b-1} < \frac{m_a+1}{1-m_a+2m_b}
\]

\noindent When this inequality is graphed with $2m_a-m_b-1>0$ the gray shaded region in Figure \ref{fig:apollonius-region} results.  The point marked is for the slopes from Figure \ref{fig:apollonius-one-comp-insc} which is just inside the shaded region.  This illustrates that we could have chosen any value for $|m_c|>1$ and the triangle would have had a taxicab Apollonius circle.

\begin{figure}
	\centering
	\includegraphics[width=7cm]{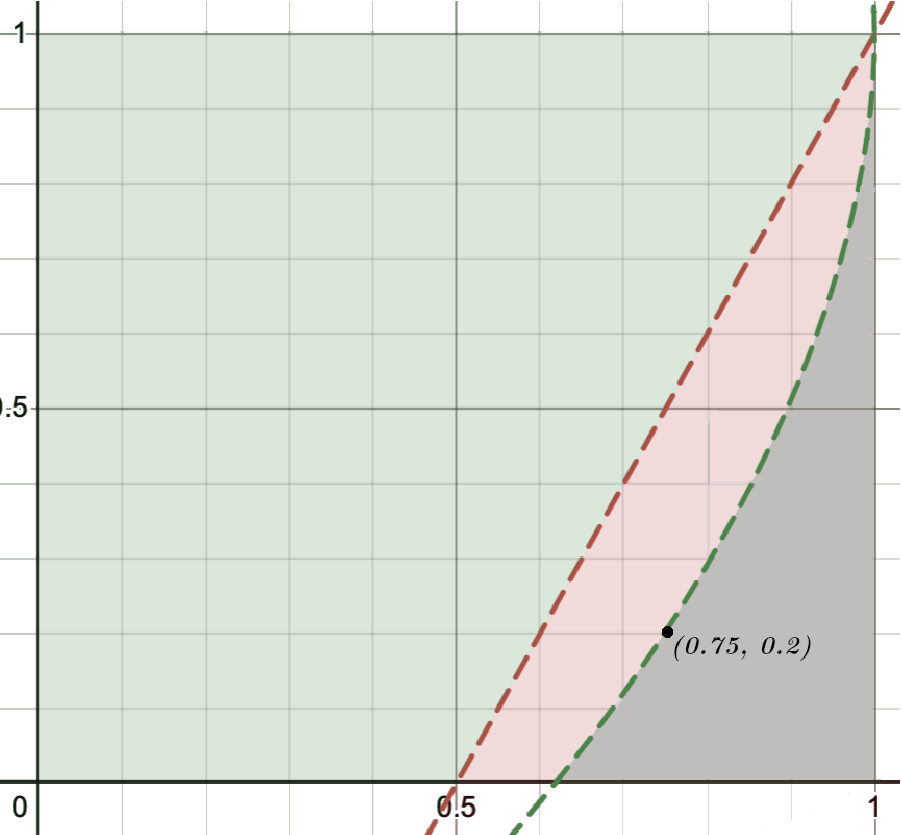}
	\caption{A region (gray) of values for $m_a$ ($x$-axis) and $m_b$ ($y$-axis) in which any choice of $|m_c|>1$ would lead to an Apollonius circle for the triangle}
	\label{fig:apollonius-region}
\end{figure}

As few final notes.

\subsection{Apollonius Points}
In Euclidean geometry, the Apollonius circle is tangent to each excircle at a point.  If these points are connected to the vertex associated with their excircle, the three lines intersect at a point called the Apollonius point.  Since the taxicab Apollonius circle is tangent to most excircles along a line segment the concept of an Apollonius point does not exist in taxicab geometry.

\subsection{Excenters}
If the centers of the excircles of a triangle (the excenters) are connected to their corresponding vertices, the three lines appear to be concurrent (Figure \ref{fig:excenter-lines-concurrent}).

\begin{figure}[t]
\centering
\includegraphics[width=7cm]{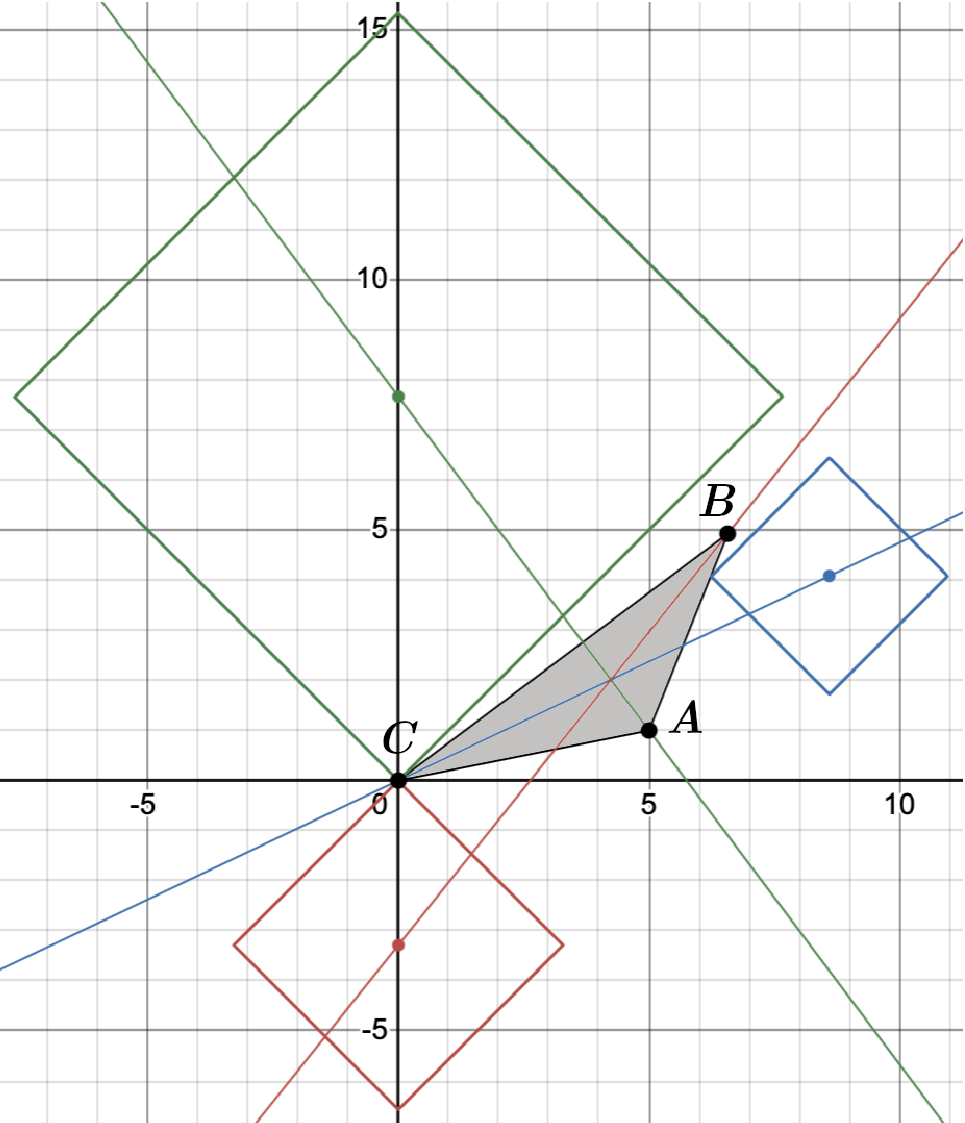}
\caption{Lines connecting the excenters of a triangle to their corresponding vertices are concurrent}
\label{fig:excenter-lines-concurrent}
\end{figure}

\end{document}